\theoremstyle{plain}
\theoremstyle{theorem}
\newtheorem{defn}{Definition}[section]
\newtheorem{prop}[defn]{Proposition}
\newtheorem{thm}[defn]{Theorem}
\newtheorem{lemma}[defn]{Lemma}
\newtheorem{coro}[defn]{Corollary}
\newtheorem{exa}[defn]{Example}
\newtheorem{rmk}[defn]{Remark}
\newtheorem{alg}[defn]{Algorithm}
\theoremstyle{remark}
\begin{document}

\title[Irreducible GNS and uniqueness of the Frobenius element]{Irreducible Generalized Numerical Semigroups and uniqueness of the Frobenius element}

%\date{Feb10, 2017}
\author{Carmelo Cisto, Gioia Failla, Chris Peterson, Rosanna Utano}

\address{Universit\'{a} di Messina,  Dipartimento di Scienze Matematiche e Informatiche, Scienze Fisiche e Scienze della Terra\\
Viale Ferdinando Stagno D'Alcontres 31\\
98166 Messina, Italy}
\email{carmelo.cisto@unime.it}

\address{Universit\'{a} Mediterranea di Reggio Calabria, DIIES\\
Via Graziella, Feo di Vito,  Reggio Calabria, Italy}
\email{gioia.failla@unirc.it}

\address{Colorado State University\\
Department of mathematics, Fort Collins, CO 80523 USA}
\email{peterson@math.colostate.edu}

\address{Universit\'{a} di Messina, Dipartimento di Scienze Matematiche e Informatiche, Scienze Fisiche e Scienze della Terra\\
Viale Ferdinando Stagno D'Alcontres 31\\
98166 Messina, Italy}
\email{rosanna.utano@unime.it}

\begin{abstract}
Let $\mathbb{N}^{d}$ be the $d$-dimensional monoid of non-negative integers. A generalized numerical semigroup is a submonoid $ S\subseteq \mathbb{N}^d$ such that $H(S)=\mathbb{N}^d \setminus S$ is a finite set. We introduce irreducible generalized numerical semigroups and characterize them in terms of the cardinality of a special subset of $H(S)$. In particular, we describe relaxed monomial orders on $\mathbb N^d$, define the Frobenius element of $S$ with respect to a given relaxed monomial order, and show that the Frobenius element of $S$ is independent of the order if the generalized numerical semigroup is irreducible.

\noindent {\textit{Keywords}:}{ Generalized Numerical Semigroups (GNS), Irreducible, Symmetric and Pseudo-symmetric GNS, Frobenius element

\noindent MSC2010: 20M14, 11D07}

\end{abstract}

%, effective generator, genus, monomial order, algorithm

\maketitle

\section*{Introduction}
Let $\mathbb{N}$ be the set of non negative integers. A \emph{numerical semigroup} is a submonoid $S$ of $\mathbb{N}$ such that $\mathbb{N}\setminus S$ is a finite set. The elements of $H(S)=\mathbb{N}\setminus S$ are called the \emph{holes} of $S$ (or \emph{gaps}) and the largest element in $H(S)$ is referred to as the \emph{Frobenius element} of $S$, denoted by $F(S)$. The number $g=|H(S)|$ is called the \emph{genus} of $S$. It is well known that every numerical semigroup $S$ has a unique minimal set of generators $G(S)$. In other words, every element in $S$ is a linear combination of elements in $G(S)$ (with coefficients in $\mathbb{N}$) and no subset of $G(S)$ has this property. A set $\{a_{1},a_{2},\ldots,a_{n}\}\subset \mathbb N$ generates a numerical semigroup if and only if 1 is the greatest common divisor of $a_{1},a_{2},\ldots,a_{n}$. See \cite{R.G.} for an interesting overview of the subject.

In \cite{Prof}, a straightforward generalization of numerical semigroups is described for submonoids of $\mathbb{N}^{d}$. A monoid $S\subseteq\mathbb{N}^{d}$ is called a generalized numerical semigroup (GNS) if $H(S)=\mathbb{N}^{d}\setminus S$  is a finite set. As for numerical semigroups, the elements of $H(S)$ are called the \emph{holes} of $S$ and the cardinality $g=|H(S)|$ is called the \emph{genus} of $S$. In \cite{Analele, Prof} several ideas originating in numerical semigroups were extended to generalized numerical semigroups and several new tools were introduced in order to handle the differences. A crucial difference between numerical semigroups in $\mathbb N$ and generalized numerical semigroups in $\mathbb N^d$ is the definition of the multiplicity and the Frobenius element of $S$. There is a natural ordering of the elements of $\mathbb N$ that respects the monoid operation of addition. This ordering is used to define the Frobenius element of $S$. While there is a natural partial ordering on $\mathbb{N}^d$, there is not a canonical total order. In \cite{Prof} the authors introduce relaxed monomial orders,  a total order that is a more general version of monomial orders that respects the monoid operation of addition in $\mathbb{N}^d$. Given a relaxed monomial order $\prec$ on $\mathbb N^d$ the Frobenius element of $S \subset \mathbb{N}^d$, with respect to $\prec$, is the largest element of $H(S)$. In this paper we introduce and characterize the {\it irreducible} generalized numerical semigroups  in terms of the cardinality of a special subset of its set of holes. We prove that, as in the case of numerical semigroups, every generalized numerical semigroup can be expressed as an intersection of finitely many irreducible GNSs. Moreover, for an irreducible GNS the Frobenius element is unique with respect to any relaxed monomial order. The structure of the paper is the following: Section 1 deals with the set of pseudo-Frobenius elements of a GNS. In Section 2 some results of \cite{RGM1} are extended to irreducible generalized numerical semigroups. In Section 3 we study the decomposition of a generalized numerical semigroup as an intersection of a finite number of irreducible ones. In Section 4 we restrict our attention to investigating conditions for which the Frobenius element does not depend on the relaxed monomial order. Section 5 presents characterizations of symmetric and pseudo-symmetric generalized numerical semigroups in terms of the Frobenius element and the genus.

\section{Useful tools}

\begin{defn} \rm Let $S\subseteq \mathbb{N}^{d}$ be a GNS. We define in $\mathbb{Z}^{d}$ the following relation: 
$$  \textbf{a}\leq_{S} \textbf{b}\ \mbox{if and only if} \ \textbf{ b}- \textbf{a}\in S$$
It is easy to see that $\leq_{S}$ is a partial order (relation) in $\mathbb{Z}^{d}$.
\end{defn}

\begin{defn} \rm Let $S\subseteq \mathbb{N}^{d}$ be a GNS.
\begin{enumerate}
\item[a)] Let $PF(S)=\{\textbf{x}\in H(S)\mid \textbf{x}+\textbf{s}\in S,\ \mbox{for all}\ \textbf{s}\in S\setminus\{\textbf{0}\}\}$. We call $PF(S)$
the set of \emph{pseudo-Frobenius elements} of $S$. The cardinality of  $PF(S)$ is called the \emph{type} of $S$.
\item[b)] Let $Ap(S,\textbf{n})=\{\textbf{s}\in S\mid \textbf{s}-\textbf{n}\notin S\}$ where $\textbf{n}\in S\setminus\{\textbf{0}\} $. We call $Ap(S,\textbf{n})$ the \emph{Ap\'ery set} of $S$ with respect to $\textbf{n}$.
\end{enumerate}
 \end{defn}

\begin{prop} Let $S\subseteq \mathbb{N}^{d}$ be a GNS. Then $PF(S)$ is the set of maximal elements in $H(S)$ with respect to $\leq_{S}$. \label{<S}\end{prop}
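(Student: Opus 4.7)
The plan is to prove both inclusions directly from the definitions, using the partial order $\leq_S$ and the characterizing property of $PF(S)$. The argument is short because both the definition of $PF(S)$ and maximality with respect to $\leq_S$ are essentially statements about what happens when one adds elements of $S\setminus\{\mathbf{0}\}$ to a hole.

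For the forward inclusion, I would take $\mathbf{x}\in PF(S)$ and suppose for contradiction that $\mathbf{x}$ is not maximal in $H(S)$ with respect to $\leq_S$. Then there exists $\mathbf{y}\in H(S)$ with $\mathbf{x}\leq_S \mathbf{y}$ and $\mathbf{y}\neq \mathbf{x}$, i.e.\ $\mathbf{s}\lik \mathbf{y}-\mathbf{x}\in S\setminus\{\mathbf{0}\}$. Since $\mathbf{x}\in PF(S)$, the defining property yields $\mathbf{x}+\mathbf{s}=\mathbf{y}\in S$, contradicting $\mathbf{y}\in H(S)$. Hence $\mathbf{x}$ is maximal in $H(S)$.

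For the reverse inclusion, suppose $\mathbf{x}\in H(S)$ is maximal in $H(S)$ with respect to $\leq_S$, and assume for contradiction that $\mathbf{x}\notin PF(S)$. Then there is some $\mathbf{s}\in S\setminus\{\mathbf{0}\}$ with $\mathbf{x}+\mathbf{s}\notin S$. Note that $\mathbf{x}+\mathbf{s}\in \mathbb{N}^d$ (both summands lie in $\mathbb{N}^d$), so $\mathbf{x}+\mathbf{s}\in H(S)$. Since $(\mathbf{x}+\mathbf{s})-\mathbf{x}=\mathbf{s}\in S$, we have $\mathbf{x}\leq_S \mathbf{x}+\mathbf{s}$, and $\mathbf{s}\neq \mathbf{0}$ gives $\mathbf{x}+\mathbf{s}\neq \mathbf{x}$, contradicting maximality.

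There is no real obstacle here: the only subtlety is recognizing that when $\mathbf{x}\in H(S)\subseteq \mathbb{N}^d$ and $\mathbf{s}\in S\subseteq \mathbb{N}^d$, the sum $\mathbf{x}+\mathbf{s}$ stays in $\mathbb{N}^d$ so that "$\mathbf{x}+\mathbf{s}\notin S$" indeed means "$\mathbf{x}+\mathbf{s}\in H(S)$"; the rest is a direct translation between the two formulations.
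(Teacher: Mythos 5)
Your proof is correct and follows essentially the same route as the paper: both inclusions are obtained by contradiction directly from the definitions of $PF(S)$ and of maximality with respect to $\leq_{S}$. If anything, your version is slightly more careful than the paper's (which phrases the maximality direction in terms of $\mathbb{Z}^{d}\setminus S$), since you note explicitly that $\mathbf{x}+\mathbf{s}$ remains in $\mathbb{N}^{d}$ and hence lies in $H(S)$.
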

\begin{proof}
We take $\textbf{x}$ maximal in $\mathbb{Z}^{d}\setminus S$ with respect to $\leq_{S}$. If there exists  $\textbf{s}\in S$ such that $\textbf{x}+\textbf{s}\notin S$ then  $\textbf{x}\leq_{S} \textbf{x}+\textbf{s}$ which contradicts the maximality of $\textbf{x}$.
Conversely let $\textbf{x}\in PF(S)$. If there exists $\textbf{y}\in \mathbb{Z}^{d}\setminus S$ such that $\textbf{y}-\textbf{x}=\textbf{s}\in S$ then $\textbf{x}+\textbf{s}\notin S$ which is again a contradiction.
\end{proof}

Since $H(S)$ is a finite set, the previous proposition implies that $PF(S)$ is nonempty. In the next proposition, we let $\rm{Maximals}_{\leq_{S}}Ap(S,\textbf{n})$ denote the set of maximal elements in $Ap(S,\textbf{n})$ with respect to the partial order $\leq_{S}$.\\

\begin{prop} Let $S\subseteq \mathbb{N}^{d}$ be a GNS and $\textbf{n}\in S$ a nonzero element. Then
$$PF(S)=\{\textbf{w}-\textbf{n}\mid \textbf{w}\in \mbox{Maximals}_{\leq_{S}}Ap(S,\textbf{n})\}$$ \end{prop}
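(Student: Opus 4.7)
The plan is to prove the set equality by double inclusion, based on the bijection $\mathbf{w} \mapsto \mathbf{w} - \textbf{n}$ between maximal elements of $Ap(S, \textbf{n})$ and elements of $PF(S)$. Both inclusions rest on two elementary observations: for $\mathbf{w} \in S$, one has $\mathbf{w} \in Ap(S, \textbf{n})$ exactly when $\mathbf{w} - \textbf{n} \notin S$; and $\mathbf{w} <_{S} \mathbf{w}'$ holds iff $\mathbf{w}' - \mathbf{w} \in S \setminus \{\textbf{0}\}$.

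For the inclusion $(\supseteq)$, I would take $\mathbf{w} \in \mathrm{Maximals}_{\leq_{S}} Ap(S, \textbf{n})$ and set $\mathbf{x} \lik \mathbf{w} - \textbf{n}$. The Ap\'ery condition immediately gives $\mathbf{x} \notin S$. For each $\mathbf{s} \in S \setminus \{\textbf{0}\}$, the element $\mathbf{w} + \mathbf{s}$ lies in $S$ and strictly dominates $\mathbf{w}$ under $\leq_{S}$, so by maximality $\mathbf{w} + \mathbf{s} \notin Ap(S, \textbf{n})$, forcing $\mathbf{x} + \mathbf{s} = (\mathbf{w} + \mathbf{s}) - \textbf{n} \in S$. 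What remains is to verify $\mathbf{x} \in \mathbb{N}^{d}$, so that $\mathbf{x} \in H(S)$: using cofiniteness of $S$ in $\mathbb{N}^{d}$, for each coordinate $i$ one can pick $\mathbf{s} \in S \setminus \{\textbf{0}\}$ whose $i$-th entry is $0$ (for $d \geq 2$, a large multiple of any $\mathbf{e}_{j}$ with $j \neq i$ works, and the $d=1$ case is the classical numerical-semigroup fact), and then the absorbing property yields $\mathbf{x} + \mathbf{s} \in \mathbb{N}^{d}$, hence $x_{i} \geq 0$.

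For the inclusion $(\subseteq)$, I would take $\mathbf{x} \in PF(S)$ and set $\mathbf{w} \lik \mathbf{x} + \textbf{n}$. The defining property of $PF(S)$ applied with $\mathbf{s} = \textbf{n}$ gives $\mathbf{w} \in S$, while $\mathbf{w} - \textbf{n} = \mathbf{x} \notin S$ places $\mathbf{w}$ in $Ap(S, \textbf{n})$. For maximality, any strict successor $\mathbf{w}' = \mathbf{w} + \mathbf{s}$ with $\mathbf{s} \in S \setminus \{\textbf{0}\}$ would satisfy $\mathbf{w}' - \textbf{n} = \mathbf{x} + \mathbf{s} \in S$ by the pseudo-Frobenius property, contradicting $\mathbf{w}' \in Ap(S, \textbf{n})$. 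The main obstacle throughout is the positivity step in $(\supseteq)$, since every other step is a direct unpacking of the definitions together with Proposition~\ref{<S}; ensuring $\mathbf{w} - \textbf{n} \in \mathbb{N}^{d}$ rather than merely $\mathbb{Z}^{d} \setminus S$ is precisely where the cofiniteness of $H(S)$ in $\mathbb{N}^{d}$ enters in an essential way.
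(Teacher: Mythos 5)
Your proof is correct and follows essentially the same double-inclusion argument as the paper, pairing $\mathbf{x}\in PF(S)$ with $\mathbf{x}+\mathbf{n}\in \mathrm{Maximals}_{\leq_S}Ap(S,\mathbf{n})$ and vice versa. You are in fact slightly more careful than the paper: your verification that $\mathbf{w}-\mathbf{n}$ actually lies in $\mathbb{N}^d$ (and not merely in $\mathbb{Z}^d\setminus S$) is a point the paper's proof of the reverse inclusion passes over in silence.
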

\begin{proof}
Let $\textbf{x}\in PF(S)$, then $\textbf{x}\notin S$ and $\textbf{x}+\textbf{n}\in S$, that is $\textbf{x}+\textbf{n}\in Ap(S,\textbf{n})$. If we consider $\textbf{w}\in Ap(S,\textbf{n})$ with $\textbf{x}+\textbf{n}\leq_{S} \textbf{w}$ it follows that $\textbf{y}=\textbf{w}-\textbf{x}-\textbf{n}\in S$ and $\textbf{w}-\textbf{n}=\textbf{x}+\textbf{y}\notin S$. So, since $\textbf{x}\in PF(S)$ it follows that $\textbf{y}=\textbf{0}$ and that $\textbf{w}=\textbf{x}+\textbf{n}$.
Now if $\textbf{w}\in \mbox{Maximals}_{\leq_{S}}Ap(S,\textbf{n})$ then $\textbf{w}-\textbf{n}\notin S$. If $\textbf{s}\in S$ and if $\textbf{w}-\textbf{n}+\textbf{s}\notin S$ then it follows that $\textbf{w}+\textbf{s}\in Ap(S,\textbf{n})$ which contradicts the maximality of $\textbf{w}$. 
\end{proof}

The previous proposition is true for numerical semigroups. It could be considered surprising that it occurs also in a GNS, because the Ap\'ery set may be an infinite set in a GNS. Let us illustrate the situation with an example.

\begin{exa} \rm Let $S=\mathbb{N}^{2}\setminus \{(0,1),(0,3),(1,0),(1,1),(1,3),(2,1),(2,0),(3,0)\}$. One computes that 
\begin{itemize}
\item $PF(S)=\{(0,3),(1,0),(1,3),(2,0),(2,1),(3,0)\}$
\item $Ap(S,(0,2))=\{(0,0),(0,5),(1,2),(1,5),(2,2),(2,3),(3,1),\\
(3,2),(4,0),(4,1),(5,0),(5,1),(n,0),(n,1)\mid n\geq 6\}$. 
\item $\rm{Maximals}_{\leq_{S}}Ap(S,(0,2))=\{(0,5),(1,2),(1,5),(2,2),(2,3),(3,2)\}$
\end{itemize}
Applying the previous proposition, the elements of $PF(S)$ are:\\ $(0,5)-(0,2)=(0,3)$, $(1,2)-(0,2)=(1,0)$,  $(1,5)-(0,2)=(1,3)$, $(2,2)-(0,2)=(2,0)$, $(2,3)-(0,2)=(2,1)$ and $(3,2)-(0,2)=(3,0)$.\end{exa}

\section{Irreducible Generalized Numerical Semigroups}

In this section we extend to the setting of a GNS some results that are formulated for numerical semigroups (see \cite{RGM1} and \cite{R.G.}). Their proofs require different arguments than for numerical semigroups. Let $\textbf{x}\in \mathbb{N}^{d}$, we denote by $x^{(i)}$ the $i$-th component for every $i\in \{1,\ldots,n\}$. In the following we denote by $\leq$ the natural partial order on $\mathbb{N}^{d}$, that is if $\textbf{x},\textbf{y}\in \mathbb{N}^{d}$, $\textbf{x}\leq \textbf{y}$ if and only if $x^{(i)}\leq y^{(i)}$ for every $i=1,\ldots,d$.  

\begin{defn} \rm Let $S\subseteq \mathbb{N}^{d}$ be a GNS. We define:
$$EH(S)=\left\lbrace \textbf{x}\in H(S)\mid 2\textbf{x}\in S\  \mbox{and}\  \textbf{x}+\textbf{s}\in S \ \mbox{for each}\ \textbf{s}\in S\setminus\{\textbf{0}\}\right\rbrace$$
\noindent and we call its elements \emph{special gaps}.
\end{defn}

\begin{rmk} \rm It is clear that $EH(S)\subseteq PF(S)$, but equality is not true in general. For instance, if $S=\mathbb{N}^{2}\setminus \{(0,1),(1,0),(2,0)\}$, we have $PF(S)=H(S)$ and $EH(S)=\{(0,1),(2,0)\}$.\\
Moreover $EH(S)$ is nonempty since, if $\textbf{f}$ is a maximal element in $H(S)$ with respect to the natural partial order in $\mathbb{N}^{d}$ (a maximal element exists because $H(S)$ is finite), then $\textbf{f}\in EH(S)$.\end{rmk}

\begin{prop} Let $S\subseteq \mathbb{N}^{d}$ be a GNS and $\textbf{x}\in H(S)$. Then $S\cup \{\textbf{x}\}$ is a semigroup if and only if $\textbf{x}\in EH(S)$. \label{amFr}\end{prop}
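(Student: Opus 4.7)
The plan is a direct verification by case analysis, using only the definition of $EH(S)$ and closure of $S \cup \{\textbf{x}\}$ under addition. Note that $\textbf{x} \neq \textbf{0}$ since $\textbf{0} \in S$ and $\textbf{x} \in H(S)$.

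For the forward direction, assume $S \cup \{\textbf{x}\}$ is a semigroup. Pick any $\textbf{s} \in S \setminus \{\textbf{0}\}$; closure forces $\textbf{x} + \textbf{s} \in S \cup \{\textbf{x}\}$, but $\textbf{x} + \textbf{s} = \textbf{x}$ would give $\textbf{s} = \textbf{0}$, a contradiction, so $\textbf{x} + \textbf{s} \in S$. Similarly $2\textbf{x} = \textbf{x} + \textbf{x} \in S \cup \{\textbf{x}\}$, and $2\textbf{x} = \textbf{x}$ would again force $\textbf{x} = \textbf{0}$, so $2\textbf{x} \in S$. Both defining conditions of $EH(S)$ are satisfied, hence $\textbf{x} \in EH(S)$.

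For the reverse direction, assume $\textbf{x} \in EH(S)$ and check closure of $S \cup \{\textbf{x}\}$ under addition by cases on whether the summands lie in $S$ or equal $\textbf{x}$. If $\textbf{a},\textbf{b} \in S$, the sum lies in $S$ because $S$ is a monoid. If $\textbf{a} = \textbf{x}$ and $\textbf{b} = \textbf{s} \in S$, then either $\textbf{s} = \textbf{0}$ (and the sum is $\textbf{x}$) or $\textbf{s} \neq \textbf{0}$ (and $\textbf{x} + \textbf{s} \in S$ by the second condition in the definition of $EH(S)$). If $\textbf{a} = \textbf{b} = \textbf{x}$, the sum is $2\textbf{x}$, which lies in $S$ by the first condition. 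In every case the sum belongs to $S \cup \{\textbf{x}\}$, so $S \cup \{\textbf{x}\}$ is a submonoid of $\mathbb{N}^d$.

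There is essentially no obstacle here: the argument is a bookkeeping exercise tailored to the two clauses in the definition of $EH(S)$. The only subtle point worth stating explicitly is that $\textbf{x} \neq \textbf{0}$, which is what lets us rule out the degenerate equalities $\textbf{x} + \textbf{s} = \textbf{x}$ and $2\textbf{x} = \textbf{x}$ in the forward direction.
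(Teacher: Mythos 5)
Your proof is correct and is exactly the routine verification the paper has in mind: the paper dismisses Proposition~\ref{amFr} as ``an easy consequence of the definition of $EH(S)$,'' and your case analysis simply writes out those details (including the worthwhile observation that $\textbf{x}\neq\textbf{0}$ rules out the degenerate equalities).
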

\begin{proof}
It is an easy consequence of the definition of $EH(S)$.
\end{proof}

\begin{defn} \rm Let $S\subseteq \mathbb{N}^{d}$ be a GNS. We call $S$  \emph{irreducible} if it cannot be expressed as the intersection of two GNSs properly containing $S$.\end{defn}

Now we prove two characterizations of irreducible generalized numerical semigroups. Similar results are formulated in \cite{RGM1} in the case of numerical semigroups.

\begin{prop} Let $S\subseteq \mathbb{N}^{d}$ be a GNS. $S$ is irreducible if and only if $|EH(S)|=1$. \label{EH1}\end{prop}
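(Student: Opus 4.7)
The plan is to prove both directions by contradiction, using Proposition \ref{amFr} (which says $S\cup\{\textbf{x}\}$ is a semigroup iff $\textbf{x}\in EH(S)$) as the main tool.

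For the easier direction ($\Rightarrow$), I would suppose $|EH(S)|\geq 2$ and pick distinct $\textbf{h}_{1},\textbf{h}_{2}\in EH(S)$. Setting $S_{i}=S\cup\{\textbf{h}_{i}\}$, Proposition \ref{amFr} guarantees each $S_{i}$ is a submonoid of $\mathbb{N}^{d}$, and since $H(S_{i})=H(S)\setminus\{\textbf{h}_{i}\}$ is still finite, each $S_{i}$ is a GNS strictly containing $S$. Using the set-theoretic identity $(A\cup B)\cap(A\cup C)=A\cup(B\cap C)$ together with $\textbf{h}_{1}\neq\textbf{h}_{2}$, one gets $S_{1}\cap S_{2}=S$, contradicting irreducibility.

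For the harder direction ($\Leftarrow$), the main auxiliary claim will be: \emph{if $T$ is any GNS with $T\supsetneq S$, then $T\cap EH(S)\neq\emptyset$}. To prove it I would take $\textbf{x}$ to be a maximal element of $T\setminus S\subseteq H(S)$ with respect to the natural partial order $\leq$ on $\mathbb{N}^{d}$ (such an element exists because $H(S)$ is finite, and $T\setminus S$ is nonempty by hypothesis). For any $\textbf{s}\in S\setminus\{\textbf{0}\}$, $\textbf{x}+\textbf{s}\in T$ since $T$ is a monoid, and $\textbf{x}+\textbf{s}>\textbf{x}$ in the natural order; maximality then forces $\textbf{x}+\textbf{s}\in S$. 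A similar argument applied to $2\textbf{x}=\textbf{x}+\textbf{x}\in T$, using that $\textbf{x}\neq\textbf{0}$ so that $2\textbf{x}>\textbf{x}$, gives $2\textbf{x}\in S$. Hence $\textbf{x}\in EH(S)$.

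Once this claim is established, I would assume $|EH(S)|=1$, say $EH(S)=\{\textbf{h}\}$, and suppose for contradiction that $S=S_{1}\cap S_{2}$ with each $S_{i}$ a GNS properly containing $S$. Applying the claim to each $S_{i}$ produces some element of $EH(S)$ in each $S_{i}$, but since $EH(S)=\{\textbf{h}\}$ this forces $\textbf{h}\in S_{1}\cap S_{2}=S$, contradicting $\textbf{h}\in H(S)$.

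The main subtle step is the auxiliary claim; the delicate point is verifying the condition $2\textbf{x}\in S$ (rather than the easier $\textbf{x}+\textbf{s}\in S$ for $\textbf{s}\in S$), which works only because comparison is done in the natural partial order $\leq$ on $\mathbb{N}^{d}$ and not in $\leq_{S}$ — in the latter order we would have no reason to expect $\textbf{x}\leq_{S}2\textbf{x}$. This is also why the natural partial order appears in the definition of $EH(S)$ in the first place.
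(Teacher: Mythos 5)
Your proof is correct and follows essentially the same route as the paper: the forward direction is identical, and your auxiliary claim for the converse (a maximal element of $T\setminus S$ in the natural partial order lies in $EH(S)$) is exactly the argument the paper applies to $S_{1}\setminus S$ and $S_{2}\setminus S$, and indeed appears later in the paper as Lemma~\ref{TminusS}. The details you flag ($\textbf{x}\neq\textbf{0}$ so $2\textbf{x}>\textbf{x}$, and comparison in $\leq$ rather than $\leq_{S}$) are handled correctly.
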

\begin{proof}
$\Rightarrow)$ Let $S$ be an irreducible GNS and suppose that there exist $\textbf{x},\textbf{y}\in EH(S)$ such that $\textbf{x}\neq \textbf{y}$. Then $S\cup \{\textbf{x}\}$ and $S\cup \{\textbf{y}\}$ are distinct GNSs properly containing $S$ and $\left( S\cup \{\textbf{x}\}\right)\cap \left(S\cup \{\textbf{y}\}\right)=S$ contradicting the irreducibility of $S$.\\
$\Leftarrow)$ Assume that $|EH(S)|=1$. We suppose there exist two different generalized numerical semigroups $S_{1}, S_{2}$, such that $S_{1}\supsetneq S$, $S_{2}\supsetneq S$ and $S_{1}\cap S_{2}=S$. Let $\textbf{x},\textbf{y}$ be maximal elements in $S_{1}\setminus S$ and $S_{2}\setminus S$, respectively with respect to the natural partial order in $\mathbb{N}^{d}$ (note that $S_{1}\setminus S$ and $S_{2}\setminus S$ are finite sets and those such maximal elements do exist). Obviously $\textbf{x},\textbf{y}\in H(S)$. Now we prove $\textbf{x},\textbf{y}\in EH(S)$. Since $\textbf{x}$ is maximal in $S_{1}\setminus S$ with respect to $\leq$, we have $2\textbf{x}\notin S_{1}\setminus S$, since $2\textbf{x}>\textbf{x}$. If $\textbf{s}\in S\setminus \{\textbf{0}\}$, then $\textbf{x}+\textbf{s}>\textbf{x}$, so $\textbf{x}+\textbf{s}\notin S_{1}\setminus S$. Furthermore $\textbf{x}\in S_{1}$ and $\textbf{s}\in S\setminus\{\textbf{0}\}\subset S_{1}\setminus\{\textbf{0}\}$, therefore $\textbf{x}+\textbf{s}\in S_{1}$ so $\textbf{x}+\textbf{s}\in S$. Hence it follows that $\textbf{x}\in EH(S)$. In a similar way we can prove that $\textbf{y}\in EH(S)$. By hypothesis $|EH(S)|=1$, then $\textbf{x}=\textbf{y}$, that is $\textbf{x}\in S_{1}\setminus S$ and $\textbf{x}\in S_{2}\setminus S$, hence $\textbf{x}\in S_{1}\cap S_{2}=S$, a contradiction since $\textbf{x}\in H(S)$.
\end{proof}

\begin{prop}  Let $S\subseteq \mathbb{N}^{d}$ be a GNS. $S$ is irreducible if and only if there exists $\textbf{f}\in H(S)$ such that for every $\textbf{h}\in H(S)$ with $2\textbf{h}\neq \textbf{f}$ we have that $\textbf{f}-\textbf{h}\in S$. \label{fund}
\end{prop}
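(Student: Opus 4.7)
The plan is to leverage Proposition \ref{EH1}, which says $S$ is irreducible iff $|EH(S)|=1$, and translate the desired ``symmetric pairing'' property into this language.

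For the forward direction, I would let $\textbf{f}$ be the unique element of $EH(S)$. By the remark preceding Proposition \ref{EH1}, every maximal element of $H(S)$ with respect to the natural partial order lies in $EH(S)$, so under the uniqueness assumption $\textbf{f}$ is in fact the maximum of $H(S)$ with respect to $\leq$. In particular $\textbf{f}-\textbf{h}\in \mathbb{N}^{d}$ for every hole $\textbf{h}$, so asking whether $\textbf{f}-\textbf{h}\in S$ is meaningful. Now set
\[
A=\{\textbf{h}\in H(S)\mid 2\textbf{h}\neq \textbf{f}\ \text{and}\ \textbf{f}-\textbf{h}\notin S\},
\]
assume for contradiction that $A\neq \emptyset$, and choose $\textbf{h}$ maximal in $A$ with respect to $\leq$. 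The strategy is to show $\textbf{h}\in EH(S)$, forcing $\textbf{h}=\textbf{f}$; but $\textbf{f}\notin A$ since $\textbf{f}-\textbf{f}=\textbf{0}\in S$, producing the contradiction.

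To place $\textbf{h}$ in $EH(S)$ I need $\textbf{h}+\textbf{s}\in S$ for every $\textbf{s}\in S\setminus\{\textbf{0}\}$ and $2\textbf{h}\in S$. For the first, if $\textbf{h}+\textbf{s}\in H(S)$ then maximality forbids $\textbf{h}+\textbf{s}\in A$, leaving two options: either $\textbf{f}-\textbf{h}-\textbf{s}\in S$, and then $\textbf{f}-\textbf{h}=(\textbf{f}-\textbf{h}-\textbf{s})+\textbf{s}\in S$, contradicting $\textbf{h}\in A$; or $2(\textbf{h}+\textbf{s})=\textbf{f}$, in which case a short check shows $\textbf{h}+2\textbf{s}\in A$ (use $2(\textbf{h}+2\textbf{s})\neq \textbf{f}$ because $\textbf{s}\neq \textbf{0}$, and $\textbf{f}-(\textbf{h}+2\textbf{s})=\textbf{h}\notin S$), contradicting maximality of $\textbf{h}$. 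The proof that $2\textbf{h}\in S$ is analogous: if $2\textbf{h}\in H(S)$, the subcase $\textbf{f}-2\textbf{h}\in S$ combines with the first claim to give $\textbf{f}-\textbf{h}=(\textbf{f}-2\textbf{h})+\textbf{h}\in S$, while the subcase $4\textbf{h}=\textbf{f}$ is handled by examining $3\textbf{h}$: either $3\textbf{h}\in S$, yielding $\textbf{f}-\textbf{h}=3\textbf{h}\in S$, or $3\textbf{h}\in A$ (since $6\textbf{h}\neq \textbf{f}$ and $\textbf{f}-3\textbf{h}=\textbf{h}\notin S$), contradicting maximality.

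For the converse, I would first verify $\textbf{f}\in EH(S)$: if $2\textbf{f}\in H(S)$, applying the hypothesis to $\textbf{h}=2\textbf{f}$ forces $-\textbf{f}\in S$, impossible; if $\textbf{f}+\textbf{s}\in H(S)$ for some $\textbf{s}\in S\setminus\{\textbf{0}\}$, applying the hypothesis to $\textbf{h}=\textbf{f}+\textbf{s}$ forces $-\textbf{s}\in S$, equally impossible. For uniqueness, suppose $\textbf{h}\in EH(S)$ with $\textbf{h}\neq \textbf{f}$. If $2\textbf{h}\neq \textbf{f}$ the hypothesis gives $\textbf{f}-\textbf{h}\in S\setminus\{\textbf{0}\}$, so $\textbf{f}=\textbf{h}+(\textbf{f}-\textbf{h})\in S$ because $\textbf{h}\in EH(S)$, contradicting $\textbf{f}\in H(S)$; and if $2\textbf{h}=\textbf{f}$ then $\textbf{f}=2\textbf{h}\in S$ directly. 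Hence $|EH(S)|=1$ and Proposition \ref{EH1} finishes the argument.

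The main obstacle is the pair of ``exceptional'' subcases $\textbf{f}=2\textbf{h}+2\textbf{s}$ and $\textbf{f}=4\textbf{h}$ in the maximality step of the forward direction: the clean identity $\textbf{f}-\textbf{h}\in S$ is not extractable immediately, so one has to identify a strictly larger element of $A$ (namely $\textbf{h}+2\textbf{s}$ or $3\textbf{h}$) whose membership in $A$ breaks the maximality of $\textbf{h}$.
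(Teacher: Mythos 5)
Your proof is correct, and while it rests on the same key lemma as the paper (Proposition~\ref{EH1}: irreducibility is equivalent to $|EH(S)|=1$), the forward direction is organized quite differently. The paper fixes an arbitrary hole $\textbf{h}$ with $2\textbf{h}\neq\textbf{f}$ and climbs upward: it builds a strictly increasing chain of holes of the form $\textbf{h}+\textbf{s}_i$ that must terminate at $\textbf{f}$ (yielding $\textbf{f}-\textbf{h}=\textbf{s}_k\in S$), and handles by a separate argument, via the multiples $i\textbf{h}$, the case where $\textbf{h}+\textbf{s}\in S$ for all $\textbf{s}\in S\setminus\{\textbf{0}\}$ but $2\textbf{h}\notin S$. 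You instead run a single extremal argument: a maximal element of the set $A$ of bad holes would have to lie in $EH(S)$ and hence equal $\textbf{f}$, which is not bad. This collapses the chain construction and the paper's ``multiples of $\textbf{h}$'' case into the two exceptional subcases $\textbf{f}=2(\textbf{h}+\textbf{s})$ and $\textbf{f}=4\textbf{h}$ that you isolate, at the cost of the slightly fiddly verifications that $\textbf{h}+2\textbf{s}$ and $3\textbf{h}$ lie in $A$. One point you leave implicit: membership of $\textbf{h}+2\textbf{s}$ in $A$ also requires $\textbf{h}+2\textbf{s}\in H(S)$; this does follow, since in that subcase $\textbf{h}+2\textbf{s}=\textbf{f}-\textbf{h}$, and $\textbf{f}-\textbf{h}\in S$ would contradict $\textbf{h}\in A$, but it should be said. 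Your converse is also a bit more direct than the paper's, which deduces $EH(S)=\{\textbf{f}\}$ by first bounding $PF(S)$ via Proposition~\ref{<S}; your hands-on verification that $\textbf{f}\in EH(S)$ and that no other special gap can exist is equally valid and arguably more self-contained.
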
 
\begin{proof}
$\Rightarrow)$ By Proposition~\ref{EH1} $EH(S)$ consists of one element. Let $EH(S)=\{\textbf{f}\}$. Let $\textbf{h}\in H(S)$ with $\textbf{h}\neq \textbf{f}$ and suppose that $2\textbf{h}\neq \textbf{f}$. Since $\textbf{h}\notin EH(S)$ we have two possibilities:\\
1) Suppose there exists $\textbf{s}_{1}\in S\setminus\{\mathbf{0}\}$ such that $\textbf{f}_{1}=\textbf{h}+\textbf{s}_{1} \notin S$, in particular $\textbf{f}_{1}-\textbf{h}\in S$. If $\textbf{f}_{1}=\textbf{f}$ the assertion follows. If $\textbf{f}_{1}\neq \textbf{f}$ then $\textbf{f}_{1}\notin EH(S)$. We show that in any case there exist $\textbf{s}_{2}\in S\setminus \{\textbf{0}\}$ and $\textbf{f}_{2}\notin S$ with $\textbf{f}_{2}>\textbf{f}_{1}$ such that $\textbf{f}_2=\textbf{h}+\textbf{s}_{2}$. Since $\textbf{f}_{1}\notin EH(S)$, if there exists $\textbf{t}\in S\setminus \{\textbf{0}\}$ such that $\textbf{f}_{1}+\textbf{t}\notin S$, we put $\textbf{f}_{2}=\textbf{f}_{1}+\textbf{t}=\textbf{h}+(\textbf{s}_{1}+\textbf{t})$, $\textbf{s}_{2}=\textbf{s}_{1}+\textbf{t}$ so $\textbf{f}_{2}>\textbf{f}_{1}$. Otherwise, if $\textbf{f}_{1}+\textbf{t}\in S$ for every $t\in S\setminus\{\textbf{0}\}$ we consider $\textbf{f}_{2}=2\textbf{f}_{1}\notin S$, so $\textbf{f}_{2}=\textbf{h}+(\textbf{h}+2\textbf{s}_{1})$ and $\textbf{s}_{2}=\textbf{h}+2\textbf{s}_{1}=\textbf{f}_{1}+\textbf{s}_{1}\in S$. Therefore we proved that there exist $\textbf{s}_{2}\in S\setminus \{\textbf{0}\}$ and $\textbf{f}_{2}\notin S$ with $\textbf{f}_{2}>\textbf{f}_{1}$  such that $\textbf{f}_2=\textbf{h}+\textbf{s}_{2}$. If $\textbf{f}_{2}=\textbf{f}$ the assertion follows, on the contrary with the same argument we obtain a sequence of elements $\textbf{f}_{i}\notin S$ with $\textbf{f}_{i}>\textbf{f}_{i-1}$ for every $i$ and $\textbf{f}_{i}=\textbf{h}+\textbf{s}_{i}$ and $\textbf{s}_{i}\in S\setminus \{\textbf{0}\}$. Since $H(S)$ is a finite set, there exists $k\in \mathbb{N}$ such that $\textbf{f}_{k}=\textbf{f}$, furthermore $\textbf{f}_{k}-\textbf{h}\in S$.\\
2) Suppose that $\textbf{h}+\textbf{s}\in S$ for every $\textbf{s}\in S\setminus \{\textbf{0}\}$ and $2\textbf{h}\notin S$. We will prove that we obtain a contradiction. Observe that for every $i\in \mathbb{N}$ we have $i\textbf{h}+\textbf{s}\in S$ for every $\textbf{s}\in S\setminus \{\textbf{0}\}$. Since $H(S)$ is finite there exists $k=\max \{i \in \mathbb{N}\mid i\textbf{h}\notin S\}$, in particular $k\textbf{h}\in EH(S)$, that is $k\textbf{h}=\textbf{f}$. Since $2\textbf{h}\neq \textbf{f}$ it is $k\geq 3$. Consider the element $\overline{\textbf{h}}=(k-1)\textbf{h}$, we have $\overline{\textbf{h}}+\textbf{s}\in S$ for every $\textbf{s}\in S\setminus \{\textbf{0}\}$ and $2\overline{\textbf{h}}=2(k-1)\textbf{h}\in S$ since $2(k-1)>k$, that is $\overline{\textbf{h}}\in EH(S)$. But this is a contradiction since $\overline{\textbf{h}}\neq \textbf{f}$.\\
$\Leftarrow)$ By hypothesis, $\textbf{f}$ is greater than every element in $H(S)$ with respect to $\leq_{S}$, except for the element $\textbf{h}\in H(S)$ such that $2\textbf{h}=\textbf{f}$, if it exists. By Proposition~\ref{<S} the possible elements in $PF(S)$ are $\textbf{f}$ and $\textbf{h}=\frac{\textbf{f}}{2}$. Furthermore $EH(S)\subseteq PF(S)$ and $\textbf{h}\notin EH(S)$, since $2\textbf{h}=\textbf{f}\notin S$, so it must be $EH(S)=\{\textbf{f}\}$, hence $S$ is irreducible.
\end{proof}

\begin{lemma} Let $S\subseteq \mathbb{N}^{d}$ be an irreducible GNS with $EH(S)=\{\textbf{f}\}$. Then one and only one of these conditions is satisfied:
\begin{enumerate}
\item $PF(S)=\{\textbf{f}\} $ if there exists a component of $\textbf{f}$ that is odd.
\item $PF(S)=\left\lbrace\textbf{f},\frac{\textbf{f}}{2}\right\rbrace$ if all the components of $\textbf{f}$ are even.
\end{enumerate} \label{lsim} \end{lemma}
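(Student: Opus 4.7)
The plan is to combine Proposition~\ref{fund} (which characterizes irreducibility by the existence of an $\mathbf{f}\in H(S)$ such that $\mathbf{f}-\mathbf{h}\in S$ whenever $\mathbf{h}\in H(S)$ and $2\mathbf{h}\neq \mathbf{f}$) with Proposition~\ref{<S} (which identifies $PF(S)$ with the $\leq_S$-maximal elements of $H(S)$). The key observation is that Proposition~\ref{fund} says exactly that almost every hole lies strictly below $\mathbf{f}$ in $\leq_S$, the only possible exception being a hole $\mathbf{h}$ satisfying $2\mathbf{h}=\mathbf{f}$. Note that $\mathbf{f}\in EH(S)\subseteq PF(S)$ is already granted.

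In the first case, some component of $\mathbf{f}$ is odd; then $2\mathbf{h}\neq\mathbf{f}$ for every $\mathbf{h}\in \mathbb{N}^d$, since $2\mathbf{h}$ has all even components. Proposition~\ref{fund} then gives $\mathbf{h}\leq_S \mathbf{f}$ for every $\mathbf{h}\in H(S)$, so $\mathbf{f}$ is the unique $\leq_S$-maximal element of $H(S)$, and Proposition~\ref{<S} yields $PF(S)=\{\mathbf{f}\}$.

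In the second case, all components of $\mathbf{f}$ are even, so $\mathbf{h}_0\lik \mathbf{f}/2\in\mathbb{N}^d$. First I would check $\mathbf{h}_0\in H(S)$: otherwise $\mathbf{h}_0\in S$ would force $\mathbf{f}=2\mathbf{h}_0\in S$, contradicting $\mathbf{f}\in H(S)$; also $\mathbf{h}_0\neq\mathbf{f}$ because $\mathbf{f}=\mathbf{0}$ is in $S$. Proposition~\ref{fund} now says $\mathbf{h}\leq_S\mathbf{f}$ for every $\mathbf{h}\in H(S)\setminus\{\mathbf{h}_0\}$, so the $\leq_S$-maximal elements of $H(S)$ are contained in $\{\mathbf{f},\mathbf{h}_0\}$.

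The only non-routine step is proving that $\mathbf{h}_0$ is indeed maximal. Suppose $\mathbf{h}_0\leq_S \mathbf{y}$ with $\mathbf{y}\in H(S)$ and $\mathbf{y}\neq \mathbf{h}_0$, so $\mathbf{s}\lik \mathbf{y}-\mathbf{h}_0\in S\setminus\{\mathbf{0}\}$. By the previous paragraph $\mathbf{y}\leq_S\mathbf{f}$, hence $\mathbf{f}-\mathbf{y}\in S$. Adding, $(\mathbf{f}-\mathbf{y})+\mathbf{s}=\mathbf{f}-\mathbf{h}_0=\mathbf{h}_0\in S$, contradicting $\mathbf{h}_0\in H(S)$. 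Therefore $\mathbf{h}_0$ is $\leq_S$-maximal, and Proposition~\ref{<S} gives $PF(S)=\{\mathbf{f},\mathbf{f}/2\}$. The exclusivity of the two cases is immediate from parity of the components of $\mathbf{f}$.
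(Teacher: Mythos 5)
Your proposal is correct and follows essentially the same route as the paper: both cases are handled by combining Proposition~\ref{fund} with Proposition~\ref{<S}, with the parity of the components of $\textbf{f}$ ruling out or forcing the extra candidate $\frac{\textbf{f}}{2}$, and the maximality of $\frac{\textbf{f}}{2}$ proved by the same "chain $\frac{\textbf{f}}{2}\leq_S\textbf{y}\leq_S\textbf{f}$ forces $\frac{\textbf{f}}{2}\in S$" contradiction. The only cosmetic difference is that you obtain the maximality of $\textbf{f}$ from $\textbf{f}\in EH(S)\subseteq PF(S)$, where the paper derives it directly from Proposition~\ref{fund}.
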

\begin{proof}
If $\textbf{f}$ has an odd component then there does not exist $\textbf{h}\in H(S)$ such that $2\textbf{h}=\textbf{f}$ and, by Proposition~\ref{fund}, $\textbf{f}$ is the maximal element in $H(S)$ with respect to $\leq_{S}$, so $PF(S)=\{\textbf{f}\}$ from Proposition~\ref{<S}.\\
If all components of $\textbf{f}$ are even then $\frac{\textbf{f}}{2}\in \mathbb{N}^{d}$ and such  an element is in $H(S)$, since $\textbf{f}\in H(S)$. It is $\textbf{f}-\frac{\textbf{f}}{2}=\frac{\textbf{f}}{2}\notin S$, then $\textbf{f}$ and $\frac{\textbf{f}}{2}$ are not comparable with $\leq_{S}$. Furthermore, by Proposition~\ref{fund}, $\textbf{f}$ is greater than all elements in $H(S)$ different from $\frac{\textbf{f}}{2}$ with respect to $\leq_{S}$, so $\textbf{f}$ is a maximal element with respect to that order, that is $\textbf{f}\in PF(S)$. Moreover, $\frac{\textbf{f}}{2}$ is maximal in $H(S)$ with respect to $\leq_{S}$, because, on the contrary, there exists $\textbf{h}\in H(S)$ such that $\frac{\textbf{f}}{2}\leq_{S}\textbf{h}\leq_{S} \textbf{f}$, but this is a contradiction. We conclude that $PF(S)=\{\textbf{f},\frac{\textbf{f}}{2}\}$.
\end{proof}

\begin{rmk} \rm
Notice that if one of the conditions of Lemma~\ref{lsim} is satisfied then $S$ is irreducible. In fact, in both cases $PF(S)=EH(S)=\{\textbf{f}\}$ and the assertion follows from Proposition~\ref{EH1}.
\end{rmk}
\noindent We gather the previous results in the following theorems.

\begin{thm} Let $S\subseteq \mathbb{N}^{d}$ be a GNS. Then the following statements are equivalent:
\begin{enumerate}
\item $|PF(S)|=1$.
\item $PF(S)=\{\textbf{f}\}$ and $\textbf{f}$ has at least one component that is odd.
\item There exists $\textbf{f}\in H(S)$ such that, for all $\textbf{h}\in H(S)$, $\textbf{f}-\textbf{h}\in S$. 
\end{enumerate}
\label{sym}\end{thm}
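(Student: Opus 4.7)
The plan is to establish the cycle $(1)\Rightarrow(2)\Rightarrow(3)\Rightarrow(1)$, exploiting the fact that each of conditions (1)--(3) will force $S$ to be irreducible (so that the structural results proved just above become available). The main structural inputs I will use are the inclusion $EH(S)\subseteq PF(S)$ with $EH(S)\ne\emptyset$, together with Propositions~\ref{EH1} and~\ref{fund} and Lemma~\ref{lsim}. The only real idea in the whole theorem is that an odd component of $\textbf{f}$ is exactly what kills the exceptional case $2\textbf{h}=\textbf{f}$ appearing in Proposition~\ref{fund} and Lemma~\ref{lsim}; everything else is bookkeeping.

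For $(1)\Rightarrow(2)$, let $PF(S)=\{\textbf{f}\}$. Since $EH(S)$ is nonempty (a maximal hole for $\leq$ belongs to $EH(S)$, as noted before Proposition~\ref{amFr}) and contained in $PF(S)$, it must equal $\{\textbf{f}\}$. Proposition~\ref{EH1} then makes $S$ irreducible, and Lemma~\ref{lsim} leaves two possibilities. The "all components even" case produces $PF(S)=\{\textbf{f},\textbf{f}/2\}$, which has cardinality two, contradicting (1). Hence $\textbf{f}$ has at least one odd component.

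For $(2)\Rightarrow(3)$, again $EH(S)\subseteq PF(S)=\{\textbf{f}\}$ and $EH(S)\ne\emptyset$, so $EH(S)=\{\textbf{f}\}$ and $S$ is irreducible by Proposition~\ref{EH1}. Proposition~\ref{fund} then says $\textbf{f}-\textbf{h}\in S$ for every $\textbf{h}\in H(S)$ with $2\textbf{h}\ne\textbf{f}$. Because $\textbf{f}$ has an odd component, there is no $\textbf{h}\in\mathbb{N}^d$ with $2\textbf{h}=\textbf{f}$, so the condition on $\textbf{h}$ is automatically satisfied for every $\textbf{h}\in H(S)$; the case $\textbf{h}=\textbf{f}$ gives $\textbf{f}-\textbf{h}=\textbf{0}\in S$, completing (3).

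For $(3)\Rightarrow(1)$, I would argue directly from the definition of $PF(S)$. First, $\textbf{f}\in PF(S)$: if some $\textbf{s}\in S\setminus\{\textbf{0}\}$ satisfied $\textbf{f}+\textbf{s}\notin S$, then $\textbf{f}+\textbf{s}\in H(S)$, so the hypothesis would yield $\textbf{f}-(\textbf{f}+\textbf{s})=-\textbf{s}\in S\subseteq\mathbb{N}^d$, forcing $\textbf{s}=\textbf{0}$, a contradiction. For uniqueness, let $\textbf{h}\in PF(S)$ with $\textbf{h}\ne\textbf{f}$. Then $\textbf{h}\in H(S)$, and the hypothesis gives $\textbf{f}-\textbf{h}\in S\setminus\{\textbf{0}\}$; the pseudo-Frobenius property of $\textbf{h}$ then makes $\textbf{h}+(\textbf{f}-\textbf{h})=\textbf{f}\in S$, contradicting $\textbf{f}\in H(S)$. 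Hence $PF(S)=\{\textbf{f}\}$, giving (1). No step looks genuinely delicate; the only thing to be careful about is remembering that the value $\textbf{h}=\textbf{f}$ is allowed in (3) and is handled trivially.
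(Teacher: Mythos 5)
Your proof is correct and follows essentially the route the paper intends: the theorem is stated there as a gathering of the preceding results, and you assemble exactly those ingredients ($EH(S)\subseteq PF(S)$ with $EH(S)\neq\emptyset$, Proposition~\ref{EH1}, Proposition~\ref{fund}, Lemma~\ref{lsim}), with the oddness of a component of $\textbf{f}$ ruling out the exceptional case $2\textbf{h}=\textbf{f}$. Your $(3)\Rightarrow(1)$ step argues directly from the definition of $PF(S)$ rather than via Proposition~\ref{<S}, but this is an equivalent and equally clean bookkeeping choice.
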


\begin{thm} Let $S\subseteq \mathbb{N}^{d}$ be a GNS. Then the following statements are equivalent:
\begin{enumerate}
\item $PF(S)=\{\textbf{f}, \frac{\textbf{f}}{2}\}$.
\item There exists $\textbf{f}\in H(S)$ such that its components are all even and for all $\textbf{h}\in H(S)$ with $\textbf{h}\neq\frac{\textbf{f}}{2}$ we have $\textbf{f}-\textbf{h}\in S$. 
\end{enumerate}
\label{p-sym}\end{thm}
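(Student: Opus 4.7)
The plan is to prove the equivalence by combining the characterizations of irreducibility in Proposition~\ref{EH1} and Proposition~\ref{fund} with the description of $PF(S)$ for an irreducible GNS provided by Lemma~\ref{lsim}, together with the inclusion $EH(S) \subseteq PF(S)$ observed earlier.

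For $(1) \Rightarrow (2)$, I would first note that since $\frac{\textbf{f}}{2} \in PF(S) \subseteq \mathbb{N}^{d}$, every component of $\textbf{f}$ must be even, and in particular $\frac{\textbf{f}}{2} \in H(S)$ (otherwise $\textbf{f} = 2(\frac{\textbf{f}}{2}) \in S$). Next, observe that $\frac{\textbf{f}}{2} \notin EH(S)$, since $2(\frac{\textbf{f}}{2}) = \textbf{f} \notin S$ violates the definition of a special gap. Because $EH(S)$ is nonempty and contained in $PF(S) = \{\textbf{f}, \frac{\textbf{f}}{2}\}$, this forces $EH(S) = \{\textbf{f}\}$. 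Proposition~\ref{EH1} then yields that $S$ is irreducible, and Proposition~\ref{fund} gives $\textbf{f} - \textbf{h} \in S$ for every $\textbf{h} \in H(S)$ with $2\textbf{h} \neq \textbf{f}$. Since all components of $\textbf{f}$ are even, the condition $2\textbf{h} = \textbf{f}$ is equivalent to $\textbf{h} = \frac{\textbf{f}}{2}$, which produces the conclusion of (2).

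For $(2) \Rightarrow (1)$, the hypothesis that all components of $\textbf{f}$ are even lets me rewrite the condition $\textbf{h} \neq \frac{\textbf{f}}{2}$ as $2\textbf{h} \neq \textbf{f}$ for $\textbf{h} \in H(S)$. Proposition~\ref{fund} then immediately yields that $S$ is irreducible; combining with Proposition~\ref{EH1} identifies $\textbf{f}$ as the unique element of $EH(S)$. Applying case (2) of Lemma~\ref{lsim}, which is the relevant case since all components of $\textbf{f}$ are even, gives $PF(S) = \{\textbf{f}, \frac{\textbf{f}}{2}\}$.

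Because the argument is essentially a direct reassembly of the preceding machinery, I do not foresee a serious obstacle. The one place that demands care is the bookkeeping needed to pass freely between the conditions ``$2\textbf{h} = \textbf{f}$'' and ``$\textbf{h} = \frac{\textbf{f}}{2}$''. This translation is legitimate precisely because $\textbf{f}$ has all even components; that property must be extracted (in the forward direction from $\frac{\textbf{f}}{2} \in PF(S)$) and used (in the reverse direction, as a hypothesis) at exactly the right moment in each half of the proof.
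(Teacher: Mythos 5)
Your proposal is correct and follows essentially the route the paper intends, since the paper states Theorem~\ref{p-sym} precisely as a gathering of Proposition~\ref{EH1}, Proposition~\ref{fund}, Lemma~\ref{lsim} and the inclusion $EH(S)\subseteq PF(S)$, which is exactly your assembly. The only hairline point is your identification of the witness $\textbf{f}$ in Proposition~\ref{fund} with the unique special gap: the statement of that proposition only asserts existence of some such element, but the $\Leftarrow$ part of its proof shows any witness satisfies $EH(S)=\{\textbf{f}\}$, so the identification you use in both directions is justified.
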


\begin{defn} \rm A GNS $S\subseteq \mathbb{N}^{d}$  is called \emph{symmetric} if it satisfies one of the equivalent statements of Therorem~\ref{sym}. $S$ is called \emph{pseudo-symmetric} if it satisfies one of the equivalent statements of Theorem~\ref{p-sym}. \end{defn}
By Lemma~\ref{lsim}, any irreducible GNS is either symmetric or pseudo-symmetric, as well as for numerical semigroups.
\noindent 

\begin{exa} \rm  Let $S=\mathbb{N}^{2}\setminus \{(0,1),(1,1),(2,1),(3,1),(4,1),(5,1),(6,1)\}$. From an elementary computation, we find that $PF(S)=\{(6,1)\}=EH(S)$, so $S$ is a symmetric GNS.\\
Let $S'=\mathbb{N}^{2}\setminus \{(1,0),(2,0),(3,0),(4,0),(5,0),(6,0),(12,0)\}$. In this case, we find that $PF(S)=\{(6,0),(12,0)\}$ and $EH(S)=\{(12,0)\}$, so $S'$ is pseudo-symmetric.\\
Both $S$ and $S'$ are irreducible GNSs.\end{exa}

\begin{rmk} \rm If $d=1$ and $S$ is a symmetric numerical semigroup, then $S$ is irreducible, so $EH(S)=\{f\}$ and by Theorem~\ref{sym} $f$ is odd. Moreover $f$ is the Frobenius number of the numerical semigroup $S$, because for every numerical semigroup the Frobenius number is trivially an element of $EH(S)$. Therefore, the definition of a symmetric GNS, provided here, is a generalization of the definition of a symmetric numerical semigroup. The same argument holds for a pseudo-symmetric GNS. \end{rmk}

\section{Decomposition of a GNS as an intersection of finitely many irreducible ones}

It is known that every numerical semigroup can be expressed as an intersection of a finite number of irreducible numerical semigroups. A decomposition with the least number of irreducible numerical semigroups involved can be obtained algorithmically (see \cite{RGM1}). The aim of this section is to obtain analogous results in our context.  

\begin{defn} \rm
Let $S\subseteq \mathbb{N}^{d}$ be a GNS. We define the sets:
\begin{enumerate}
\item $\mathcal{O}(S)=\{T\subseteq \mathbb{N}^{d}\mid T\ \mbox{is a GNS},\ S\subseteq T\}$, named the set of the \emph{oversemigroups} of $S$.
\item $\mathcal{I}(S)=\{T\in \mathcal{O}(S)\mid T\ \mbox{is irreducible}\}$.
\end{enumerate}

\end{defn}

\noindent Observe that $\mathcal{O}(S)$ is a finite set since $S$ has finite complement in $\mathbb{N}^{d}$, moreover $\mathcal{I}(S)\subseteq \mathcal{O}(S)$.\\
We know that if $S\subseteq\mathbb{N}^{d}$ is a GNS and $\textbf{x}\notin S$ then $S\cup \{\textbf{x}\}$ is a GNS if and only if $\textbf{x}$ is a special gap of $S$. In particular, in order to obtain the set of oversemigroups of $S$ it suffices to compute the set $EH(S)$, then for all $\textbf{x}\in EH(S)$ we compute $S_{\textbf{x}}=S\cup \{\textbf{x}\}$ and perform the procedure for all semigroups $S_{\textbf{x}}$ to obtain $\mathbb{N}^{d}$.

\begin{prop}
Let $S\subseteq \mathbb{N}^{d}$ be a GNS. If $S$ is not irreducible then $S=S_{1}\cap \cdots \cap S_{n}$, with $S_{1},\ldots,S_{n} \in \mathcal{I}(S)$.
\end{prop}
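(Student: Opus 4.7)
The plan is to proceed by induction on the genus $g = |H(S)|$, proving the slightly stronger statement that every GNS $S \subseteq \mathbb{N}^d$ admits a decomposition $S = S_1 \cap \cdots \cap S_n$ with $S_1, \ldots, S_n \in \mathcal{I}(S)$ (when $S$ is already irreducible, take $n = 1$ and $S_1 = S$). For the base case, any GNS with small enough genus is trivially irreducible: if $|H(S)| \leq 1$ then $|EH(S)| \leq 1$, and since $EH(S)$ is nonempty (any maximal element of $H(S)$ with respect to the natural partial order lies in $EH(S)$), Proposition~\ref{EH1} gives irreducibility.

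For the inductive step, assume the statement holds for all GNSs of genus strictly less than $g$, and suppose $S$ is not irreducible. By Proposition~\ref{EH1}, $|EH(S)| \geq 2$, so we may fix two distinct special gaps $\textbf{x}, \textbf{y} \in EH(S)$. By Proposition~\ref{amFr}, the sets $S_{\textbf{x}} = S \cup \{\textbf{x}\}$ and $S_{\textbf{y}} = S \cup \{\textbf{y}\}$ are GNSs, each properly containing $S$ and hence of genus $g - 1 < g$. A direct verification shows
$$
S = S_{\textbf{x}} \cap S_{\textbf{y}},
$$
since the inclusion $\supseteq$ follows from $\textbf{x} \neq \textbf{y}$ (any common element outside $S$ would have to equal both $\textbf{x}$ and $\textbf{y}$), and $\subseteq$ is immediate.

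Applying the inductive hypothesis to $S_{\textbf{x}}$ and $S_{\textbf{y}}$ yields decompositions $S_{\textbf{x}} = T_1 \cap \cdots \cap T_r$ and $S_{\textbf{y}} = U_1 \cap \cdots \cap U_s$ with each $T_i \in \mathcal{I}(S_{\textbf{x}})$ and each $U_j \in \mathcal{I}(S_{\textbf{y}})$. Since $S \subseteq S_{\textbf{x}} \subseteq T_i$ and $S \subseteq S_{\textbf{y}} \subseteq U_j$, all of these irreducible semigroups lie in $\mathcal{I}(S)$. Combining gives
$$
S = S_{\textbf{x}} \cap S_{\textbf{y}} = T_1 \cap \cdots \cap T_r \cap U_1 \cap \cdots \cap U_s,
$$
completing the induction.

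There is no serious obstacle here: the argument is a straightforward descent enabled by Propositions~\ref{amFr} and~\ref{EH1}. The only subtle point worth verifying is that a finite intersection of GNSs is a GNS (the complement of the intersection is the finite union of the complements), which ensures that the inductive decompositions live inside $\mathcal{O}(S)$ and hence in $\mathcal{I}(S)$. The proof also implicitly uses that $\mathcal{O}(S)$ is finite (noted right after the definition), so the recursion terminates and $n$ is well-defined.
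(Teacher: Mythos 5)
Your proof is correct, but it takes a somewhat different route from the paper. The paper argues directly from the definition of irreducibility: since $S$ is not irreducible, $S=S_{1}\cap S_{2}$ for some GNSs properly containing $S$, and one simply repeats this splitting on any non-irreducible factor, with termination justified by the finiteness of the (over)semigroups of $S$. You instead make the splitting explicit and canonical: non-irreducibility gives $|EH(S)|\geq 2$ via Proposition~\ref{EH1}, and adjoining two distinct special gaps $\mathbf{x}\neq\mathbf{y}$ yields, by Proposition~\ref{amFr}, the GNSs $S\cup\{\mathbf{x}\}$ and $S\cup\{\mathbf{y}\}$ with $S=(S\cup\{\mathbf{x}\})\cap(S\cup\{\mathbf{y}\})$, after which induction on the genus finishes the argument. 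What your version buys is a cleaner termination argument (the genus strictly drops at each step, whereas the paper's appeal to the finiteness of $\mathcal{I}(S)$ is really standing in for the finiteness of $\mathcal{O}(S)$ together with strict growth of the factors) and an effective construction of the factors, at the cost of invoking the special-gap machinery that the paper's proof does not need. One small blemish: in your base case, if $g=0$ then $S=\mathbb{N}^{d}$ and $EH(S)$ is empty, so the asserted nonemptiness of $EH(S)$ fails there; this is harmless because $\mathbb{N}^{d}$ is vacuously irreducible under the paper's definition (take $n=1$), and the degenerate case never arises in your inductive step since a GNS of genus $1$ is already irreducible, but the justification should be phrased accordingly rather than through Proposition~\ref{EH1}.
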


\begin{proof}
If $S$ is not irreducible then $S=S_{1}\cap S_{2}$, where $S_{1}$ and $S_{2}$ are GNSs such that $S\subset S_{1}$ and $S\subset S_{2}$. If $S_{1}$ and $S_{2}$ are not irreducible then we can repeat for them the same argument of $S$. Finally, we obtain $S$ as an intersection of a finitely many irreducible GNSs, since $\mathcal{I}(S)$ is a a finite set.
\end{proof}

\noindent Let $\mathrm{Minimals}_{\subseteq}\mathcal{I}(S)$ be the set of elements in $\mathcal{I}(S)$ that are minimal with respect to set inclusion. A decomposition  $S=S_{1}\cap \cdots \cap S_{n}$ of $S$, with $S_{i}\in \mathcal{I}(S)$ for every $i$, is called \emph{minimal} (or \emph{not refinable}) if  $S_{1},\ldots,S_{n} \in \mathrm{Minimals}_{\subseteq}\mathcal{I}(S)$.

\begin{prop} \label{minimalC}
Let $S\subseteq \mathbb{N}^{d}$ be a GNS and $S=S_{1}\cap \cdots \cap S_{n}$ with $S_{1},\ldots,S_{n} \in \mathcal{I}(S)$. Then there exist $S_{1}',\ldots,S_{n}' \in \mathrm{Minimals}_{\subseteq}\mathcal{I}(S)$ such that $S=S_{1}'\cap \cdots \cap S_{n}'$.
\end{prop}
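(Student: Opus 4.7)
The plan is to refine the given decomposition index by index: for each $S_i$, we descend inside the finite poset $\mathcal{I}(S)$ (ordered by inclusion) to pick a minimal irreducible oversemigroup of $S$ that still lies inside $S_i$, and then check that the intersection of these minimal pieces is still $S$.

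First I would fix $i \in \{1,\ldots,n\}$ and consider the subfamily $\mathcal{F}_i = \{T \in \mathcal{I}(S) \mid T \subseteq S_i\}$. This family is nonempty, since $S_i$ itself belongs to $\mathcal{F}_i$, and it is finite because $\mathcal{I}(S) \subseteq \mathcal{O}(S)$ and $\mathcal{O}(S)$ is finite (as already observed after the definition of $\mathcal{O}(S)$). Hence I can pick $S_i' \in \mathcal{F}_i$ which is minimal with respect to inclusion inside $\mathcal{F}_i$.

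Next, I would verify that $S_i'$ is actually minimal in all of $\mathcal{I}(S)$, i.e.\ that $S_i' \in \mathrm{Minimals}_{\subseteq}\mathcal{I}(S)$. If some $T \in \mathcal{I}(S)$ satisfied $T \subsetneq S_i'$, then from $S_i' \subseteq S_i$ we would also get $T \subseteq S_i$, so $T \in \mathcal{F}_i$, contradicting the minimality of $S_i'$ within $\mathcal{F}_i$. Therefore no such $T$ exists and $S_i' \in \mathrm{Minimals}_{\subseteq}\mathcal{I}(S)$.

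Finally, doing this for each $i$ produces $S_1', \ldots, S_n' \in \mathrm{Minimals}_{\subseteq}\mathcal{I}(S)$ with $S \subseteq S_i' \subseteq S_i$ for every $i$ (the left inclusion because $S_i' \in \mathcal{I}(S)$). Intersecting over $i$ gives
$$ S \;=\; \bigcap_{i=1}^{n} S \;\subseteq\; \bigcap_{i=1}^{n} S_i' \;\subseteq\; \bigcap_{i=1}^{n} S_i \;=\; S, $$
so $S = S_1' \cap \cdots \cap S_n'$, which is the desired minimal decomposition. There is no serious obstacle: the only point that deserves care is the second step, where one must confirm that \emph{local} minimality inside $\mathcal{F}_i$ upgrades to \emph{global} minimality in $\mathcal{I}(S)$, and this follows by a one-line transitivity argument using $S_i' \subseteq S_i$.
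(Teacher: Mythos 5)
Your proof is correct and follows essentially the same route as the paper: replace each $S_i$ by a minimal element of $\mathcal{I}(S)$ contained in it and observe that $S \subseteq \bigcap_i S_i' \subseteq \bigcap_i S_i = S$. The paper states this in one line, while you spell out the finiteness of $\mathcal{O}(S)$, the descent inside $\mathcal{F}_i$, and the local-to-global minimality step, all of which are implicit in the paper's argument.
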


\begin{proof}
If $S=S_{1}\cap \cdots \cap S_{n}$ and there exists $i\in \{1,\ldots,n\}$ such that $S_{i}\notin \mathrm{Minimals}_{\subseteq}\mathcal{I}(S)$ then we can choose $S_{i}'\subset S_{i}$ with $S_{i}'\in \mathrm{Minimals}_{\subseteq}\mathcal{I}(S)$.
\end{proof}

\begin{lemma} \label{TminusS}
Let $S$ and $T$ be two generalized numerical semigroups in $\mathbb{N}^{d}$ such that $S\subsetneq T$. Let $\textbf{h}\in \mathrm{Maximals}(T\setminus S)$ (maximal with respect to the natural partial order in $\mathbb{N}^{d}$). Then $\textbf{h}\in EH(S)$. 
\end{lemma}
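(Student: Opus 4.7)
The plan is to verify the three defining conditions for $\mathbf{h}$ to lie in $EH(S)$, namely that $\mathbf{h}\in H(S)$, that $2\mathbf{h}\in S$, and that $\mathbf{h}+\mathbf{s}\in S$ for every $\mathbf{s}\in S\setminus\{\mathbf{0}\}$. The common engine behind all three checks is the same: an element of the form $\mathbf{h}+\mathbf{v}$ (with $\mathbf{v}\in\mathbb{N}^d$ nonzero and well chosen) lies in $T$ because $T$ is a semigroup, and it strictly dominates $\mathbf{h}$ in the natural partial order, so by maximality of $\mathbf{h}$ in $T\setminus S$ it cannot lie in $T\setminus S$, forcing it into $S$.

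First, $\mathbf{h}\in H(S)$ is immediate since $\mathbf{h}\in T\setminus S$ in particular means $\mathbf{h}\notin S$. I would next point out that $\mathbf{h}\neq \mathbf{0}$, because $\mathbf{0}\in S$ always, so $\mathbf{0}\notin T\setminus S$. This is the small but crucial observation that makes the strict inequalities in the natural partial order work.

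For the second condition, $2\mathbf{h}=\mathbf{h}+\mathbf{h}\in T$ since $T$ is closed under the monoid operation, and $2\mathbf{h}>\mathbf{h}$ in the natural partial order on $\mathbb{N}^d$ because $\mathbf{h}\neq \mathbf{0}$. Maximality of $\mathbf{h}$ in $T\setminus S$ with respect to $\leq$ then forces $2\mathbf{h}\notin T\setminus S$, and combined with $2\mathbf{h}\in T$ this gives $2\mathbf{h}\in S$. For the third condition, take any $\mathbf{s}\in S\setminus\{\mathbf{0}\}$; since $S\subseteq T$, both $\mathbf{h}$ and $\mathbf{s}$ lie in $T$, so $\mathbf{h}+\mathbf{s}\in T$, and $\mathbf{h}+\mathbf{s}>\mathbf{h}$ because $\mathbf{s}\neq \mathbf{0}$. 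Again by maximality, $\mathbf{h}+\mathbf{s}\notin T\setminus S$, whence $\mathbf{h}+\mathbf{s}\in S$.

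Assembling these three verifications yields $\mathbf{h}\in EH(S)$. There is essentially no obstacle here; the only subtlety worth flagging is the need to use the strict inequality in the natural partial order (hence the preliminary remark $\mathbf{h}\neq\mathbf{0}$) so that maximality of $\mathbf{h}$ in $T\setminus S$ can be applied to the larger elements $2\mathbf{h}$ and $\mathbf{h}+\mathbf{s}$.
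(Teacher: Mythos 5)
Your proof is correct and follows essentially the same argument as the paper: use closure of $T$ under addition together with maximality of $\textbf{h}$ in $T\setminus S$ to force $2\textbf{h}$ and $\textbf{h}+\textbf{s}$ into $S$. The only difference is that you explicitly record $\textbf{h}\neq\textbf{0}$ to justify the strict inequalities, a point the paper leaves implicit.
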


\begin{proof}
Let us denote with $\leq$ the natural partial ordering in $\mathbb{N}^{d}$ and let  $\textbf{h}\in \mathrm{Maximals}(T\setminus S)$. Then $\textbf{h}\in H(S)$ and for all $\textbf{s}\in S\setminus\{\textbf{0}\}$ we have that $\textbf{h}+\textbf{s}\in T$ and $\textbf{h}+\textbf{s}>\textbf{h}$, so $\textbf{h}+\textbf{s}\in S$. Analogously $2\textbf{h}\in T$ and $2\textbf{h}>\textbf{h}$ so $2\textbf{h}\in S$. Thus $\textbf{h}\in EH(S)$.
\end{proof}

\begin{defn} \rm
Let $S\subseteq \mathbb{N}^{d}$ be a GNS and $T\in \mathcal{O}(S)$. We define:
$$\mathcal{C}(T)=\{\textbf{h}\in EH(S)\mid \textbf{h}\notin T\}$$

\end{defn}

\begin{prop}
Let $S\subseteq \mathbb{N}^{d}$ be a GNS and let $S_{1},\ldots,S_{n}\in \mathcal{O}(S)$. Then the following conditions are equivalent:
\begin{itemize}
\item[1.] $S=S_{1}\cap \cdots \cap S_{n}$
\item[2.] For all $\textbf{h}\in EH(S)$ there exists $i\in \{1,\ldots,n\}$ such that $\textbf{h}\notin S_{i}$.
\item[3.] $\mathcal{C}(S_{1})\cup \cdots \cup \mathcal{C}(S_{n})=EH(S)$.
\end{itemize}
\label{decompose}
\end{prop}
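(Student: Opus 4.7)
The plan is to prove the cyclic chain $(1)\Rightarrow(2)\Rightarrow(3)\Rightarrow(1)$, where the equivalence $(2)\Leftrightarrow(3)$ is essentially definitional and the nontrivial work is in $(2)\Rightarrow(1)$, for which the key tool is Lemma~\ref{TminusS}.

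For $(1)\Rightarrow(2)$, I would simply observe that $EH(S)\subseteq H(S)$, so if $\mathbf{h}\in EH(S)$ then $\mathbf{h}\notin S=S_{1}\cap\cdots\cap S_{n}$, hence $\mathbf{h}\notin S_{i}$ for some $i\in\{1,\ldots,n\}$. The equivalence $(2)\Leftrightarrow(3)$ is immediate from the definition of $\mathcal{C}(T)$: an element $\mathbf{h}\in EH(S)$ lies in $\mathcal{C}(S_{1})\cup\cdots\cup\mathcal{C}(S_{n})$ if and only if $\mathbf{h}\notin S_{i}$ for some $i$, so the union equals $EH(S)$ exactly when condition (2) holds.

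The substantive step is $(2)\Rightarrow(1)$. The inclusion $S\subseteq S_{1}\cap\cdots\cap S_{n}$ is clear since each $S_{i}\in\mathcal{O}(S)$. For the converse, set $T=S_{1}\cap\cdots\cap S_{n}$ and suppose towards contradiction that $S\subsetneq T$. I would first note that $T$ is itself a GNS in $\mathbb{N}^{d}$: it is a submonoid of $\mathbb{N}^{d}$ as an intersection of submonoids, and $\mathbb{N}^{d}\setminus T\subseteq\mathbb{N}^{d}\setminus S$ is finite. Hence $T\setminus S$ is a nonempty finite subset of $\mathbb{N}^{d}$, and admits a maximal element $\mathbf{h}$ with respect to the natural partial order on $\mathbb{N}^{d}$. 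By Lemma~\ref{TminusS}, such a maximal element lies in $EH(S)$. Then hypothesis (2) gives some index $i$ with $\mathbf{h}\notin S_{i}$, contradicting $\mathbf{h}\in T\subseteq S_{i}$.

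The only mild obstacle is checking that $T$ is a GNS so Lemma~\ref{TminusS} can be invoked, but this follows at once from the finiteness of $\mathbb{N}^{d}\setminus S$; after that, the lemma does all the work, and the contradiction is immediate. No additional argument on relaxed monomial orders or on the structure of $PF(S)$ is needed for this proposition.
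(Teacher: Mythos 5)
Your proposal is correct and follows essentially the same route as the paper: $(1)\Rightarrow(2)$ by the trivial observation that $EH(S)\subseteq H(S)$, $(2)\Leftrightarrow(3)$ by definition of $\mathcal{C}$, and $(2)\Rightarrow(1)$ by taking a maximal element of $(S_{1}\cap\cdots\cap S_{n})\setminus S$ and invoking Lemma~\ref{TminusS}. Your explicit check that the intersection is itself a GNS is a small, welcome addition that the paper leaves implicit.
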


\begin{proof}
$1.\Rightarrow 2.$ Let $\textbf{h}\in EH(S)$. Then $\textbf{h}\notin S=S_{1}\cap \cdots \cap S_{n}$, that is $\textbf{h}\notin S_{i}$ for some $i\in \{1,\ldots,d\}$.\\
$2.\Rightarrow 1.$ Suppose $S\subsetneq S_{1}\cap \cdots \cap S_{n}$. Then let $\textbf{h}\in \mathrm{Maximals}((S_{1}\cap \cdots \cap S_{n})\setminus S)$, by Lemma~\ref{TminusS} we have $\textbf{h}\in EH(S)$ and $\textbf{h}\in S_{1}\cap \cdots \cap S_{n}$, that is a contradiction.\\
$2.\Leftrightarrow 3.$ It is trivial.
\end{proof}

\noindent As in the case of numerical semigroups it is possible to consider a minimal decomposition into irreducibles and to produce an algorithm to compute such a decomposition.

\begin{alg}\rm Let $S\subseteq \mathbb{N}^{d}$ be a not irreducible GNS.

\begin{enumerate}
\item Compute the set $EH(S)$.
\item Set $I=\emptyset$ and $C=\{S\}$.
\item For all $S'$ in $C$ let $B$ be the set of generalized numerical semigroups $\overline{S}$ such that $|\overline{S}\setminus S'|=1$.
\item Remove from $B$ the generalized numerical semigroups $S'$ such that $EH(S)\subseteq S'$.
\item Remove from $B$ the generalized numerical semigroups $S'$ such that there exists $T\in I$ with $T\subseteq S'$.
\item Set $C=\{S'\in B\mid S'\ \mbox{is not irreducible}\}$.
\item Set $I=\{S'\in B\mid S'\ \mbox{is irreducible}\}$.
\item If $C\neq \emptyset$ go to Step 3.
\item For every $S'\in I$, compute $\mathcal{C}(S')$.
\item Return a set of semigroups $S_{1}',\ldots,S_{r}'$ that are minimal in $I$ and 
$$\mathcal{C}(S_{1}')\cup \cdots \cup \mathcal{C}(S_{r}')=EH(S).$$
\end{enumerate}
\label{algirr}
\end{alg}

We explain briefly some lines of the previous algorithm:
\begin{itemize}
\item Step 3: The semigroups $\overline{S}$ are obtained as $S'\cup \{\textbf{x}\}$ with $\textbf{x}\in EH(S')$.
\item Step 4: If $EH(S)\subseteq S'$ by Proposition~\ref{decompose} $S'$ does not occur in a representation of $S$ as an intersection of GNSs.
\item Step 5: Since we want to compute a minimal decomposition of $S$ as an intersection of irreducible semigroups we do not need the oversemigroups of a computed irreducible GNS.
\item Step 8: By step 4 and step 5 it will occur that $C$ will be empty at a certain iteration.
\item Step 10: Since $\mathrm{Minimals}_{\subseteq}\mathcal{I}(S)\subseteq I$ and we can obtain a minimal decomposition as in Proposition~\ref{minimalC}. 
\end{itemize}

\noindent It is known that for numerical semigroups a minimal decomposition as defined in Proposition~\ref{minimalC} is not unique and  it is not always minimal with respect to the number of the semigroups that appear in the decomposition. The same occurs for GNSs. In Step 10 of Algorithm~\ref{algirr} we could produce also a decomposition of a GNS containing the minimum number of irreducible components.\\

\vspace{6pt}

\noindent The set of special gaps of a GNS allows to obtain some properties on maximality of a given GNS in the set of all GNSs.

\begin{prop}
Let $S\subseteq \mathbb{N}^{d}$ be a GNS and $\{\textbf{h}_{1},\ldots, \textbf{h}_{t}\}\subseteq H(S)$. Then the following are equivalent:
\begin{itemize}
\item[1.] $S$ is maximal with respect to inclusion among the GNSs \ $T$ such that $T\cap \ \{\textbf{h}_{1},\ldots, \textbf{h}_{t}\}=\emptyset$.
\item[2.] $EH(S)\subseteq \{\textbf{h}_{1},\ldots, \textbf{h}_{t}\}$.
\end{itemize}
\label{maximality}
\end{prop}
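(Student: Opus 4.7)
The plan is to prove the two implications using Proposition~\ref{amFr} (which tells us exactly when adjoining a hole to $S$ gives another GNS) and Lemma~\ref{TminusS} (which tells us the maximal elements of $T \setminus S$ must lie in $EH(S)$). First I would record the observation that since $\{\mathbf{h}_{1},\ldots,\mathbf{h}_{t}\} \subseteq H(S)$, we have $S \cap \{\mathbf{h}_{1},\ldots,\mathbf{h}_{t}\} = \emptyset$, so $S$ is itself a candidate for the maximality in statement~1.

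For $1. \Rightarrow 2.$, I would argue by contraposition. Assume there exists $\mathbf{x} \in EH(S)$ with $\mathbf{x} \notin \{\mathbf{h}_{1},\ldots,\mathbf{h}_{t}\}$. By Proposition~\ref{amFr}, the set $S \cup \{\mathbf{x}\}$ is a GNS, and it properly contains $S$. Moreover, since we only adjoin the single element $\mathbf{x}$, and $\mathbf{x}$ is assumed not to be among the $\mathbf{h}_{i}$'s, while $S$ itself meets none of them, $(S \cup \{\mathbf{x}\}) \cap \{\mathbf{h}_{1},\ldots,\mathbf{h}_{t}\} = \emptyset$. This contradicts the maximality of $S$.

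For $2. \Rightarrow 1.$, suppose $EH(S) \subseteq \{\mathbf{h}_{1},\ldots,\mathbf{h}_{t}\}$ and let $T$ be any GNS with $S \subseteq T$ and $T \cap \{\mathbf{h}_{1},\ldots,\mathbf{h}_{t}\} = \emptyset$. I would show $T = S$. If not, then $T \setminus S$ is a nonempty finite set, and I can choose $\mathbf{h}$ maximal in $T \setminus S$ with respect to the natural partial order on $\mathbb{N}^{d}$. By Lemma~\ref{TminusS}, $\mathbf{h} \in EH(S)$, so by the hypothesis $\mathbf{h} \in \{\mathbf{h}_{1},\ldots,\mathbf{h}_{t}\}$. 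But $\mathbf{h} \in T$, contradicting $T \cap \{\mathbf{h}_{1},\ldots,\mathbf{h}_{t}\} = \emptyset$.

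There is no serious obstacle here; both directions are short applications of the tools already built in the section. The only point that needs a moment of care is the sentence ``$S$ meets none of the $\mathbf{h}_{i}$'s'' in the first direction, which is used implicitly to ensure that enlarging $S$ by a single special gap outside $\{\mathbf{h}_{1},\ldots,\mathbf{h}_{t}\}$ preserves the disjointness condition.
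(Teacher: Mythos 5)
Your proof is correct and follows essentially the same route as the paper: the direction $1.\Rightarrow 2.$ adjoins a special gap outside $\{\textbf{h}_{1},\ldots,\textbf{h}_{t}\}$ to contradict maximality (via Proposition~\ref{amFr}), and $2.\Rightarrow 1.$ takes a maximal element of $T\setminus S$ and applies Lemma~\ref{TminusS}. If anything, your version is slightly more careful than the paper's, which writes ``$\textbf{h}\in H(S)$'' where $\textbf{h}\in EH(S)$ is what is actually needed to know $S\cup\{\textbf{h}\}$ is a GNS.
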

\begin{proof}
$1.\Rightarrow 2.$ Let $\textbf{h}\in H(S)$ and suppose that $\textbf{h}\notin \{\textbf{h}_{1},\ldots, \textbf{h}_{t}\}$, then $S\subsetneq S\cup \{\textbf{h}\}$ and $(S\cup \{\textbf{h}\})\cap \{\textbf{h}_{1},\ldots, \textbf{h}_{t}\}=\emptyset$, a contradiction.\\
$2.\Rightarrow 1.$ Let $T$ be a GNS such that $T\cap \{\textbf{h}_{1},\ldots, \textbf{h}_{t}\}=\emptyset$ and suppose that $S\subsetneq T$. Then consider $\textbf{h}\in \mathrm{Maximals}_{\leq}(T\setminus S)$. By Lemma~\ref{TminusS}, $\textbf{h}\in EH(S)$, but  $\textbf{h}\notin \{\textbf{h}_{1},\ldots, \textbf{h}_{t}\}$, since $T\cap \ \{\textbf{h}_{1},\ldots, \textbf{h}_{t}\}=\emptyset$, that is a contradiction.
\end{proof}

This means that if $S$ and $T$ are two GNSs such that $EH(S)=EH(T)$ but $H(S)\neq H(T)$ then $S\nsubseteq T$ and $T\nsubseteq S$.

\section{Uniqueness of the Frobenius element}

In $\mathbb{N}^{d}$ there is not a natural total order so it is not immediately clear how to define  the Frobenius element for a GNS (as it is for numerical semigroups). In \cite{Prof} this aim is reached by defining a \emph{relaxed monomial order}, whose main purpose is to allow the building of the \emph{GNSs tree} for each $d$, in a similar way to the case $d=1$. The Frobenius element of a GNS is uniquely determined with respect to the defined relaxed monomial order. We want to investigate conditions for which the  Frobenius element does not depend on the chosen relaxed monomial order. 

\begin{defn}\rm
A total order, $\prec$, on the elements of $\mathbb{N}^d$ is called a {\it relaxed monomial order} if it satisfies:
\begin{itemize}
\item[i)] If $\mathbf{v},\mathbf{w}\in \mathbb N^d$ and if $\mathbf{v}\prec \mathbf{w}$ then $\mathbf{v}\prec \mathbf{w}+\mathbf{u}$ for any $\mathbf{u}\in \mathbb N^d$.
\item[ii)] If $\mathbf{v}\in \mathbb N^d$ and $\mathbf{v}\neq \mathbf{0}$ then $\mathbf{0}\prec \mathbf{v}$.
\end{itemize}
\end{defn} 

The well known monomial orders defined on the monomials of a given polynomial ring induce in $\mathbb{N}^{d}$ total orders which can be called \emph{monomial orders in $\mathbb{N}^{d}$}, and such orders are in particular relaxed monomial orders (the converse is not true). It will be useful to consider that any monomial order on the elements of $\mathbb N^d$ can be defined in terms of dot products via an ordered collection of $d$ linearly independent {\it weight vectors} in $\mathbb R^d_{\geq 0}$ \cite{CLO07}. More precisely, if $\mathbf{w}_1, \mathbf{w}_2, \dots, \mathbf{w}_d$ are linearly independent vectors in $\mathbb R^d_{\geq 0}$ then one can define a monomial order $<_m$ on the elements of $\mathbb N^d$ by $$\mathbf{u}<_m\mathbf{v} \iff {\rm the\ smallest\ } i \ {\rm for\ which \ }\mathbf{w}_i\cdot \mathbf{u}\neq \mathbf{w}_i\cdot \mathbf{v} {\rm \ has \ }\mathbf{w}_i\cdot \mathbf{u} < \mathbf{w}_i\cdot \mathbf{v}.$$
Furthermore $<_{m}$ is also a relaxed monomial order.
%The independence of the weight vectors makes $<_m$ into a total order. Note that if the entries of $\mathbf{w}_1$ consist of {\it rationally independent numbers} then $\mathbf{u}\neq \mathbf{v} \implies \mathbf{w}_1\cdot \mathbf{u}\neq \mathbf{w}_1\cdot \mathbf{v}$ (thus only one weight vector is needed in this case). 

\begin{exa} \rm Examples of relaxed monomial orders in $\mathbb{N}^{d}$ are:

\begin{itemize}

\item[1.] The {\it lexicographic order} determined setting the weight vectors $\mathbf{w}_{i}=\textbf{e}_{i}$, where $\textbf{e}_{1}=(1,0,0,\dots, 0), \textbf{e}_{2}=(0,1,0,\dots,0), \dots, \textbf{e}_{d}=(0,0,\dots, 1)$. This is also a monomial order.

\item[2.] Every permutation of the elements  $\textbf{e}_{i}$, for $i=1,\ldots,d$ determines a relaxed monomial order.

\item[3.] Let $<_{1}$ be a monomial order and and let $\min(\mathbf{u})=\min \{u^{(i)}\mid i=1,\ldots,d\}$. Define $\mathbf{u}\prec \mathbf{v}$ if, 
\vskip 1pt
i) $\min(\mathbf{u})< \min(\mathbf{v})$ or if
\vskip 1pt
ii) $\min(\mathbf{u})= \min(\mathbf{v})$ and $\mathbf{u}<_1\mathbf{v}$.\\
Note that $\prec$ is a relaxed monomial order but is not a monomial order in general.
\end{itemize} \label{example} \end{exa}

\begin{defn} \rm  Let $S\subseteq \mathbb{N}^{d}$ be a GNS and $\prec$ be a relaxed monomial order in $\mathbb{N}^{d}$. The \emph{Frobenius element} of $S$ with respect to $\prec$, denoted by $\textbf{F}_{\prec}$, is the largest element of $H(S)$ with respect to $\prec$. \end{defn}

\begin{prop} Every relaxed monomial order in $\mathbb{N}^{d}$ extends the natural partial order in $\mathbb{N}^{d}$.\label{relax}
\end{prop}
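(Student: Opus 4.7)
The plan is to show that whenever $\mathbf{u},\mathbf{v}\in\mathbb{N}^{d}$ satisfy $\mathbf{u}\leq\mathbf{v}$ in the natural partial order and $\mathbf{u}\neq\mathbf{v}$, we have $\mathbf{u}\prec\mathbf{v}$. Rewriting the componentwise inequality as $\mathbf{v}=\mathbf{u}+\mathbf{w}$ with $\mathbf{w}\in\mathbb{N}^{d}\setminus\{\mathbf{0}\}$ reduces the whole statement to deriving $\mathbf{u}\prec\mathbf{u}+\mathbf{w}$ from the two axioms of a relaxed monomial order.

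For this I would argue by contradiction, using the trichotomy satisfied by any strict total order. Since $\mathbf{u}\neq\mathbf{v}$, the only alternative to $\mathbf{u}\prec\mathbf{v}$ is $\mathbf{v}\prec\mathbf{u}$. Applying axiom (i) to $\mathbf{v}\prec\mathbf{u}$ with the increment $\mathbf{w}\in\mathbb{N}^{d}$ yields $\mathbf{v}\prec\mathbf{u}+\mathbf{w}=\mathbf{v}$, contradicting the irreflexivity of $\prec$. Hence $\mathbf{u}\prec\mathbf{v}$, as required. Axiom (ii) is not needed here, although it is of course consistent with the conclusion in the special case $\mathbf{u}=\mathbf{0}$.

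The main subtlety that I would emphasise is \emph{why} the natural temptation to start from $\mathbf{0}\prec\mathbf{w}$ (via axiom (ii)) and ``add $\mathbf{u}$ to both sides'' must be resisted: relaxed monomial orders need not be translation invariant. Example~\ref{example}(3) already exhibits a relaxed monomial order in which adding the same vector to both sides of a strict comparison reverses it, so any proof has to apply axiom (i) directly, keeping the unknown comparison on the left, and extract the contradiction from irreflexivity. Once this observation is in place there is no real obstacle; the proof is essentially a one-line application of axiom (i) together with trichotomy.
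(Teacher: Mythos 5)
Your proof is correct and is essentially the paper's own argument: write $\mathbf{v}=\mathbf{u}+\mathbf{w}$, assume $\mathbf{v}\prec\mathbf{u}$ by trichotomy, and apply axiom (i) to obtain $\mathbf{v}\prec\mathbf{u}+\mathbf{w}=\mathbf{v}$, contradicting irreflexivity. Your added remark about the failure of translation invariance (and why axiom (ii) alone does not suffice) is a sound observation but not needed for the result.
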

\begin{proof}
Let $\textbf{a},\textbf{b}\in \mathbb{N}^{d}$ be distinct elements with $\textbf{a}\leq \textbf{b}$, so there exists $\textbf{c}\in \mathbb{N}^{d}$ such that $\textbf{a}+\textbf{c}=\textbf{b}$. Furthermore, let $\prec$ be a relaxed monomial ordering in $\mathbb{N}^{d}$. Suppose that $\textbf{b}\prec \textbf{a}$, then $\textbf{b}\prec \textbf{a}+\textbf{c}=\textbf{b}$, a contradiction.
\end{proof}

\begin{prop} Let $S\subseteq \mathbb{N}^{d}$ be a GNS and $\textbf{f}\in H(S)$. Then $\textbf{F}_{\prec}=\textbf{f}$ for every relaxed monomial order $\prec$ if and only if $\textbf{f}$ is the unique maximal element in $H(S)$ with respect to the natural partial order in $\mathbb{N}^{d}$. \label{maxfr} 
\end{prop}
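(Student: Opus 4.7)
The plan is to prove the two implications separately, using Proposition~\ref{relax} (every relaxed monomial order extends the natural partial order) as the key tool.

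For the $(\Leftarrow)$ direction, I would argue directly. Suppose $\textbf{f}$ is the unique $\leq$-maximal element of $H(S)$, and let $\prec$ be any relaxed monomial order. Each $\textbf{h}\in H(S)$ with $\textbf{h}\neq \textbf{f}$ must satisfy $\textbf{h}<\textbf{f}$ in the natural partial order (otherwise $\textbf{h}$ would be a second maximal element or $\textbf{f}$ would not be maximal). By Proposition~\ref{relax}, $\prec$ extends $\leq$, hence $\textbf{h}\prec \textbf{f}$, and so $\textbf{f}$ is the $\prec$-maximum of $H(S)$, i.e., $\textbf{F}_{\prec}=\textbf{f}$.

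For the $(\Rightarrow)$ direction, I would first observe that $\textbf{f}$ must be $\leq$-maximal in $H(S)$: if some $\textbf{h}\in H(S)$ satisfied $\textbf{f}<\textbf{h}$, then Proposition~\ref{relax} would give $\textbf{f}\prec\textbf{h}$ for every relaxed monomial order $\prec$, so $\textbf{F}_{\prec}\neq \textbf{f}$, a contradiction. Next, suppose toward a contradiction that there exists another $\leq$-maximal element $\textbf{g}\in H(S)$ with $\textbf{g}\neq \textbf{f}$. Since both are maximal and distinct, they are $\leq$-incomparable, so there is an index $j\in\{1,\ldots,d\}$ with $g^{(j)}>f^{(j)}$. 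I then exhibit an explicit relaxed monomial order placing $\textbf{g}$ above $\textbf{f}$: take the lexicographic order $<_m$ on $\mathbb{N}^d$ determined by the ordered sequence of weight vectors $\textbf{e}_{j},\textbf{e}_{i_{1}},\ldots,\textbf{e}_{i_{d-1}}$, where $i_{1},\ldots,i_{d-1}$ is any enumeration of $\{1,\ldots,d\}\setminus\{j\}$. By Example~\ref{example}(2), this is a relaxed monomial order. The first weight vector compared is $\textbf{e}_{j}$, and since $f^{(j)}<g^{(j)}$, we get $\textbf{f}<_m \textbf{g}$. Therefore $\textbf{F}_{<_m}\neq \textbf{f}$, contradicting the hypothesis. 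Hence no such $\textbf{g}$ exists, and $\textbf{f}$ is the unique $\leq$-maximal element of $H(S)$.

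The only step requiring real thought is the construction in the backward direction of a relaxed monomial order under which the incomparable pair $\textbf{f},\textbf{g}$ is ordered with $\textbf{g}$ larger; but this is handled cleanly by the permutation-lex construction from Example~\ref{example}, which is already guaranteed to be a relaxed monomial order. The rest reduces to mechanical applications of Proposition~\ref{relax}.
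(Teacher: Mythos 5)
Your proposal is correct and follows essentially the same route as the paper: the $(\Leftarrow)$ direction is the immediate consequence of Proposition~\ref{relax} (using that the unique $\leq$-maximal element of the finite set $H(S)$ is in fact its $\leq$-maximum), and the $(\Rightarrow)$ direction argues by contradiction, choosing a coordinate $j$ with $g^{(j)}>f^{(j)}$ and building a permuted lexicographic order with $\textbf{e}_{j}$ as first weight vector --- exactly the paper's weight-vector construction. The only cosmetic slip is that in your closing paragraph you call this construction ``the backward direction,'' though it belongs to the forward implication.
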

\begin{proof}
The sufficient condition is an easy consequence of Proposition~\ref{relax}. Observe that $\textbf{f}$ must be maximal in $H(S)$ with respect to the natural partial order in $\mathbb{N}^{d}$, because if there exists $\textbf{h}\in H(S)$ with $\textbf{f}\leq\textbf{h}$ then $\textbf{f}\preceq \textbf{h}$ for every relaxed monomial order in $\mathbb{N}^{d}$, by Proposition~\ref{relax}. We have to prove that $\textbf{f}$ is the unique maximal element. If there exists another maximal element $\textbf{g}\neq \textbf{f}$ then, since $\textbf{f}\nleq \textbf{g}$ and $\textbf{g}\nleq\textbf{f}$, $\textbf{g}$ has got at least one component, the $j$-th for instance, that is larger than the $j$-th component of $\textbf{f}$. We are going to define a relaxed monomial order, $\prec$, by assigning suitable weight vectors $\textbf{w}_{1},\textbf{w}_{2},\ldots,\textbf{w}_{d}$.  Let $\textbf{e}_{i}$, for $i=1,\ldots,d$, denote the standard basis vectors. Fix $\textbf{w}_{1}=\textbf{e}_{j},\textbf{w}_{j}=\textbf{e}_{1}$ and $\textbf{w}_{i}=\textbf{e}_{i}$ for $i\neq 1,j$. The relaxed monomial order defined in such a way leads to $\textbf{f}\prec \textbf{g}$, but this is a contradiction.
\end{proof}
By Proposition~\ref{maxfr}, if $S$ is a GNS such that there exists a unique maximal element $\textbf{f}\in H(S)$ with respect to the partial order $\leq$ in $\mathbb{N}^d$,  then $S$ has a unique Frobenius element, independent of the fixed relaxed monomial order. 
\begin{defn} \rm Let $S$ be a GNS such that there exists a unique maximal $\textbf{f}\in H(S)$ with respect to the partial order $\leq$ in $\mathbb{N}^d$. We call $(S,\textbf{f})$  a Frobenius GNS and we can refer to $\textbf{f}$ as the Frobenius element of $S$ without ambiguity. \end{defn}

\begin{rmk} \rm
 Every numerical semigroup is a Frobenius GNS $(S,f)$, where $f$ is the Frobenius number. 
 \end{rmk}

\begin{prop} Let $S\subseteq \mathbb{N}^{d}$ be an irreducible GNS with $EH(S)=\{\textbf{f}\}$. Then $(S,\textbf{f})$ is a Frobenius GNS.\label{frirr}\end{prop}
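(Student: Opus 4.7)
The goal is to show that under the hypothesis $EH(S)=\{\mathbf{f}\}$, the element $\mathbf{f}$ is the \emph{unique} maximal element of $H(S)$ with respect to the natural partial order $\leq$ on $\mathbb{N}^d$.

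The plan rests on a single clean observation: every $\leq$-maximal element of $H(S)$ automatically lies in $EH(S)$. To see this, take $\mathbf{h}\in H(S)$ that is maximal with respect to $\leq$. For any $\mathbf{s}\in S\setminus\{\mathbf{0}\}$ we have $\mathbf{h}+\mathbf{s}>\mathbf{h}$, so by maximality $\mathbf{h}+\mathbf{s}\notin H(S)$, i.e.\ $\mathbf{h}+\mathbf{s}\in S$. Similarly $2\mathbf{h}>\mathbf{h}$ yields $2\mathbf{h}\in S$. Thus $\mathbf{h}$ satisfies exactly the definition of a special gap, so $\mathbf{h}\in EH(S)$. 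This step is the core of the argument; after this, the conclusion is almost immediate.

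Since $H(S)$ is a finite nonempty set (it is nonempty because $EH(S)\neq\emptyset$), it has at least one $\leq$-maximal element. By the observation, every such maximal element must belong to $EH(S)=\{\mathbf{f}\}$, and so coincides with $\mathbf{f}$. In particular $\mathbf{f}$ itself is $\leq$-maximal, and no other element of $H(S)$ can be $\leq$-maximal. Hence $\mathbf{f}$ is the unique maximal element of $H(S)$ with respect to $\leq$, which is exactly the condition for $(S,\mathbf{f})$ to be a Frobenius GNS.

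I do not expect a real obstacle here: the proof is essentially the remark that the conditions defining $EH(S)$ are automatically forced on any $\leq$-maximal gap, combined with the hypothesis $|EH(S)|=1$. The only thing worth emphasizing, to keep the logic tidy, is that one should not invoke Proposition~\ref{fund} or the case split of Lemma~\ref{lsim}; the argument above works uniformly in both the symmetric and pseudo-symmetric cases, because in the pseudo-symmetric case the extra element $\mathbf{f}/2\in PF(S)$ is not $\leq$-maximal (it is strictly below $\mathbf{f}$).
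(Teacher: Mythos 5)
Your proof is correct, but it takes a different route from the paper's. The paper deduces the statement from Proposition~\ref{fund}: for any $\textbf{h}\in H(S)$, either $\textbf{h}=\frac{\textbf{f}}{2}$ (trivially $\leq\textbf{f}$) or $\textbf{f}-\textbf{h}\in S\subseteq\mathbb{N}^{d}$, so $\textbf{h}\leq\textbf{f}$; this shows directly that $\textbf{f}$ is the greatest element of $H(S)$. You instead avoid the irreducibility characterization entirely and argue from first principles: any $\leq$-maximal gap is automatically a special gap (because adding a nonzero element of $S$, or doubling, strictly increases it and so lands in $S$), and since $H(S)$ is finite and nonempty every maximal gap exists and must equal the unique element $\textbf{f}$ of $EH(S)$. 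This observation is in fact already present in the paper --- it is exactly the remark following the definition of $EH(S)$ showing $EH(S)\neq\emptyset$, and it reappears as Lemma~\ref{TminusS} with $T=\mathbb{N}^{d}$ --- so your argument could be phrased as a one-line citation of that remark. What the paper's route buys is the stronger explicit conclusion that $\textbf{f}-\textbf{h}\in S$ for every hole $\textbf{h}\neq\frac{\textbf{f}}{2}$, i.e.\ that $\textbf{f}$ is a maximum and not merely the unique maximal element (though in a finite poset these coincide, a point worth stating if you keep your phrasing); what your route buys is economy, since it needs only the definition of $EH(S)$, finiteness of $H(S)$, and the hypothesis $|EH(S)|=1$, and it indeed handles the symmetric and pseudo-symmetric cases uniformly without any case split on $\frac{\textbf{f}}{2}$.
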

\begin{proof}
It suffices to prove that $\textbf{f}$ is the unique maximal element in $H(S)$ with respect to the natural partial order in $\mathbb{N}^{d}$. Let $\textbf{h}\in H(S)$, if $\textbf{h}=\frac{\textbf{f}}{2}$ then it is trivial that $\textbf{h}\leq \textbf{f}$. If $\textbf{h}\neq \textbf{f}$ then by Proposition~\ref{fund}, $\textbf{f}-\textbf{h}\in S\subseteq \mathbb{N}^{d}$, so $\textbf{h}\leq\textbf{f}$.
\end{proof}

\begin{rmk} \rm The converse of Proposition~\ref{frirr} is not true. Let $S=\mathbb{N}^{2}\setminus \{(1,0),(2,0),(3,0),(4,0),(6,0),(7,0),(9,0)\}$. It is easy to see that $(9,0)$ is the maximum in $H(S)$ with respect to natural partial order in $\mathbb{N}^{2}$, but $EH(S)=\{(9,0),(7,0),(6,0),(4,0)\}$. So $(S,(9,0))$ is a Frobenius GNS but it is not irreducible. \end{rmk}

\begin{coro} Symmetric and pseudo-symmetric GNSs are Frobenius GNSs. \end{coro}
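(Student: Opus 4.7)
The proof will be essentially immediate from the machinery already assembled in the paper, so the plan is just to chain the right references together.

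First I would unpack the definition: a symmetric GNS satisfies one of the conditions of Theorem~\ref{sym} and a pseudo-symmetric GNS satisfies one of the conditions of Theorem~\ref{p-sym}. In either case, the condition pins down $PF(S)$ to be of the form $\{\textbf{f}\}$ (with some odd component of $\textbf{f}$) or $\{\textbf{f},\tfrac{\textbf{f}}{2}\}$ (with all components of $\textbf{f}$ even). These are exactly the two possibilities appearing in the conclusion of Lemma~\ref{lsim}, and by the remark immediately following Lemma~\ref{lsim}, each of these cases forces $S$ to be irreducible with $EH(S)=\{\textbf{f}\}$.

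Once irreducibility is in hand, the conclusion is handed to us by Proposition~\ref{frirr}: any irreducible GNS $S$ with $EH(S)=\{\textbf{f}\}$ gives a Frobenius GNS $(S,\textbf{f})$. Applying this to both the symmetric and pseudo-symmetric case simultaneously completes the argument.

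There is no real obstacle here; the corollary is a bookkeeping step that merges Theorems~\ref{sym} and~\ref{p-sym} with Proposition~\ref{frirr} through the bridge provided by Lemma~\ref{lsim} and its remark. The only thing to be careful about is to explicitly state that in the pseudo-symmetric case the distinguished element $\textbf{f}$ (and not $\tfrac{\textbf{f}}{2}$) is the Frobenius element, which is consistent because $\tfrac{\textbf{f}}{2}\le \textbf{f}$ in the natural partial order on $\mathbb{N}^d$, so $\textbf{f}$ is the unique maximal element of $H(S)$ as required in the definition of a Frobenius GNS.
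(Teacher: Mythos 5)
Your proposal is correct and is exactly the chain the paper intends: symmetric and pseudo-symmetric GNSs satisfy the conditions of Lemma~\ref{lsim}, hence by the remark following it are irreducible with $EH(S)=\{\textbf{f}\}$, and Proposition~\ref{frirr} then yields that $(S,\textbf{f})$ is a Frobenius GNS. The paper leaves the corollary unproved precisely because it is this immediate bookkeeping step, so there is nothing to add.
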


\begin{prop}
Let $S\subseteq \mathbb{N}^{d}$ be a GNS. Then $S$ is irreducible with Frobenius element $\textbf{f}$ if and only if it is maximal in the set of GNSs not containing $\textbf{f}$.
\end{prop}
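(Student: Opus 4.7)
The plan is to reduce the statement directly to Proposition~\ref{maximality} (with the singleton $\{\textbf{h}_1,\ldots,\textbf{h}_t\}=\{\textbf{f}\}$), which characterizes maximality in the class of GNSs avoiding a prescribed finite set of holes in terms of $EH(S)$.

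For the forward direction, I would assume $S$ is irreducible with Frobenius element $\textbf{f}$. By Proposition~\ref{EH1} irreducibility forces $|EH(S)|=1$, say $EH(S)=\{\textbf{g}\}$; and by Proposition~\ref{frirr} the pair $(S,\textbf{g})$ is a Frobenius GNS, so the Frobenius element of $S$ equals $\textbf{g}$. Hence $\textbf{f}=\textbf{g}$ and $EH(S)=\{\textbf{f}\}\subseteq\{\textbf{f}\}$. Applying Proposition~\ref{maximality} with $\{\textbf{h}_1\}=\{\textbf{f}\}$ immediately yields that $S$ is maximal with respect to inclusion among GNSs $T$ satisfying $T\cap\{\textbf{f}\}=\emptyset$, i.e.\ not containing $\textbf{f}$.

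For the reverse direction, assume $S$ is maximal in the set of GNSs not containing $\textbf{f}$. In particular $\textbf{f}\notin S$, so $\textbf{f}\in H(S)$. Proposition~\ref{maximality} applied to $\{\textbf{h}_1\}=\{\textbf{f}\}$ gives $EH(S)\subseteq\{\textbf{f}\}$. As observed after the definition of $EH(S)$, this set is always nonempty, so in fact $EH(S)=\{\textbf{f}\}$. Proposition~\ref{EH1} then says $S$ is irreducible, and Proposition~\ref{frirr} says $(S,\textbf{f})$ is a Frobenius GNS; thus $\textbf{f}$ is indeed the Frobenius element of $S$.

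No step looks genuinely hard here: the whole argument is a single application of Proposition~\ref{maximality} in each direction, glued to Propositions~\ref{EH1} and~\ref{frirr}. The only mild subtlety is being careful that in the reverse direction the element $\textbf{f}$ named in the hypothesis really coincides with the element produced by Proposition~\ref{frirr} as the Frobenius element; this is immediate because Proposition~\ref{frirr} identifies the Frobenius element with the unique element of $EH(S)$, which we have just shown to be $\textbf{f}$.
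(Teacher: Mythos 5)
Your argument is correct, and the forward direction coincides with the paper's (irreducibility plus the identification $EH(S)=\{\textbf{f}\}$, then Proposition~\ref{maximality} with $t=1$). Where you diverge is the reverse direction: the paper does not invoke Proposition~\ref{maximality} there at all, but argues directly by contradiction --- if $S=S_{1}\cap S_{2}$ with $S\subsetneq S_{1}$, $S\subsetneq S_{2}$, then maximality forces $\textbf{f}\in S_{1}$ and $\textbf{f}\in S_{2}$, hence $\textbf{f}\in S$, which is absurd. That argument is shorter, but it only establishes irreducibility and leaves implicit why $\textbf{f}$ is in fact the Frobenius element of $S$. Your route, applying the implication $1.\Rightarrow 2.$ of Proposition~\ref{maximality} to get $EH(S)\subseteq\{\textbf{f}\}$, then using nonemptiness of $EH(S)$, Proposition~\ref{EH1} and Proposition~\ref{frirr}, is symmetric with the forward direction and explicitly nails down that the unique special gap, and hence the Frobenius element, is $\textbf{f}$ itself --- so it proves the full statement (including the ``with Frobenius element $\textbf{f}$'' clause) at the modest cost of a slightly longer chain of citations. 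Both proofs are sound; yours is the more complete reading of what the proposition asserts.
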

\begin{proof}
$(\Rightarrow)$ If $S$ is irreducible with Frobenius element $\textbf{f}$ then $EH(S)=\{\textbf{f}\}$ so it easily follows from Proposition~\ref{maximality}.\\
$(\Leftarrow)$ Suppose that $S=S_{1}\cap S_{2}$ with $S\subset S_{1}$ and $S\subset S_{2}$. Then $\textbf{f}\in S_{1}\cap S_{2}$, that is a contradiction.
\end{proof}

 Now we want investigate about the existence of a relaxed monomial order such that, with respect to it,  an element $\textbf{h}\in H(S)$ is the Frobenius element. By Proposition~\ref{relax} these elements must be maximal in $H(S)$ with respect to the natural partial order in $\mathbb{N}^{d}$.

\begin{defn} \rm Let $S\subseteq \mathbb{N}^{d}$ be a GNS and $\textbf{h}\in H(S)$. We call $\textbf{h}$ \emph{Frobenius allowable} if there exists a relaxed monomial order, $\prec$, such that $\textbf{F}_{\prec}=\textbf{h}$.\end{defn}

\begin{prop} Let $S\subseteq \mathbb{N}^{d}$ be a GNS whose hole set $H(S)$ has exactly two maximal elements, $\textbf{h}_{1},\textbf{h}_{2}$, with respect to the natural partial order in $\mathbb{N}^{d}$. Then both $\textbf{h}_{1}$ and $\textbf{h}_{2}$ are Frobenius allowable. \end{prop}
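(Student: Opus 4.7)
The plan is to exhibit, for each $i\in\{1,2\}$, an explicit relaxed monomial order making $\mathbf{h}_i$ the maximum element of $H(S)$. By symmetry, it suffices to construct such an order for $\mathbf{h}_1$. Since $\mathbf{h}_1$ and $\mathbf{h}_2$ are distinct maximal elements with respect to the natural partial order $\leq$ on $\mathbb{N}^d$, they are incomparable. In particular, there is some coordinate index $j \in \{1,\dots,d\}$ such that $h_1^{(j)} > h_2^{(j)}$.

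First I would use this $j$ to construct a (genuine) monomial order via the weight-vector construction recalled before Example~\ref{example}. Namely, I would set $\mathbf{w}_1 = \mathbf{e}_j$ and complete this to an ordered basis $\mathbf{w}_1,\mathbf{w}_2,\dots,\mathbf{w}_d$ of $\mathbb{R}^d_{\ge 0}$ using the remaining standard basis vectors in any order. The resulting order $\prec$ is a monomial order, hence in particular a relaxed monomial order. By construction, $\mathbf{w}_1\cdot\mathbf{h}_2 = h_2^{(j)} < h_1^{(j)} = \mathbf{w}_1\cdot\mathbf{h}_1$, so $\mathbf{h}_2 \prec \mathbf{h}_1$.

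Next I would verify that every $\mathbf{h}\in H(S)\setminus\{\mathbf{h}_1\}$ satisfies $\mathbf{h}\prec \mathbf{h}_1$. Because $\mathbf{h}_1$ and $\mathbf{h}_2$ are the only $\leq$-maximal elements of the finite set $H(S)$, every hole $\mathbf{h}$ satisfies $\mathbf{h}\leq \mathbf{h}_1$ or $\mathbf{h}\leq \mathbf{h}_2$. If $\mathbf{h}\leq \mathbf{h}_1$ and $\mathbf{h}\neq \mathbf{h}_1$, Proposition~\ref{relax} gives $\mathbf{h}\prec \mathbf{h}_1$. Otherwise $\mathbf{h}\leq \mathbf{h}_2$, and applying Proposition~\ref{relax} again yields $\mathbf{h}\preceq \mathbf{h}_2$, which combined with $\mathbf{h}_2\prec \mathbf{h}_1$ gives $\mathbf{h}\prec \mathbf{h}_1$. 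Thus $\mathbf{F}_{\prec} = \mathbf{h}_1$, i.e.\ $\mathbf{h}_1$ is Frobenius allowable. Swapping the roles of $\mathbf{h}_1$ and $\mathbf{h}_2$ (and using a coordinate $k$ with $h_2^{(k)}>h_1^{(k)}$) gives the analogous order for $\mathbf{h}_2$.

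I do not expect a substantive obstacle here: the argument is essentially the same two-line construction used in Proposition~\ref{maxfr} to realize a second maximal element as a Frobenius element, combined with the observation that every non-maximal hole is dominated by one of $\mathbf{h}_1,\mathbf{h}_2$ and so is automatically smaller in any relaxed monomial order once the two maximal holes are ordered as desired. The only place to be careful is to record that the completion of $\mathbf{w}_1=\mathbf{e}_j$ to a full monomial order is unambiguous (any choice works), so that the existence of $\prec$ follows with no further analysis of the specific structure of $S$.
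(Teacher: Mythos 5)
Your proof is correct and follows essentially the same route as the paper: both construct, for each maximal hole, a permuted-lexicographic (weight-vector) order that ranks it above the other maximal hole, and then use the fact that relaxed monomial orders extend the natural partial order (Proposition~\ref{relax}) so that all remaining holes are automatically smaller. The only difference is that you spell out explicitly the final ``every hole lies below $\mathbf{h}_1$ or $\mathbf{h}_2$'' step, which the paper compresses into its opening ``it suffices to prove'' sentence.
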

\begin{proof}
It suffices to prove that there exist relaxed monomial orders, $\prec_{1},\prec_{2}$, such that $\textbf{h}_{1}\prec_{1}\textbf{h}_{2}$ and $\textbf{h}_{2}\prec_{2}\textbf{h}_{1}$. Since $\textbf{h}_{1}$ and $\textbf{h}_{2}$ are distinct maximal elements then $\textbf{h}_{1}$ has  at least one coordinate, the $i$-th for instance, such that it is greater than the $i$-th coordinate of $\textbf{h}_{2}$, and $\textbf{h}_{2}$ has a coordinate, the $j$-th for instance, that it is greater than the $j$-th coordinate of $\textbf{h}_{1}$. We can define two relaxed monomial orders by weight vectors, as in the proof of Proposition~\ref{maxfr}. So we set $\prec_{1}$ by the assignements $\textbf{w}_{1}=\textbf{e}_{j}, \textbf{w}_{j}=\textbf{e}_{1}, \textbf{w}_{k}=\textbf{e}_{k}$ for $k\neq 1,j$, while for $\prec_{2}$ we choose $\textbf{w}_{1}=\textbf{e}_{i}, \textbf{w}_{i}=\textbf{e}_{1}, \textbf{w}_{k}=\textbf{e}_{k}$ for $k\neq 1,i$. In this way, $\textbf{h}_{1}\prec_{1}\textbf{h}_{2}$ and $\textbf{h}_{2}\prec_{2}\textbf{h}_{1} $ are both satisfied.
\end{proof}

\begin{exa} \rm If $H(S)$ has  more than two maximal elements, the same argument does not work, but  it can occur that any maximal element in $H(S)$ is Frobenius allowable, like in the following example.\\

Let $S=\mathbb{N}^{2}\setminus \{(0,1),(1,0),(1,1),(1,2),(1,3),(2,1),(3,0),(5,0),(7,0)\}$. The maximal elements in $H(S)$ with respect to the natural partial order in $\mathbb{N}^{2}$ are $(1,3),(2,1),(7,0)$. Let $\textbf{e}_{1}=(1,0),\textbf{e}_{2}=(0,1)$, then:
\begin{itemize}
\item $(1,3)$ is Frobenius allowable, in fact we can define $\prec$, relaxed monomial order by weight vectors $\textbf{w}_{1}=\textbf{e}_{2}$ and $\textbf{w}_{2}=\textbf{e}_{1}$. We have $(7,0)\prec (2,1) \prec (1,3) $.
\item $(7,0)$ is Frobenius allowable, in fact we can define $\prec$, relaxed monomial order by weight vectors $\textbf{w}_{1}=\textbf{e}_{1}$ and $\textbf{w}_{2}=\textbf{e}_{2}$. We have $(1,3)\prec (2,1) \prec (7,0) $.
\item $(2,1)$ is also Frobenius allowable with respect to the relaxed monomial order $\prec$ defined in Example ~\ref{example}, (3.),  where  $<_{1}$ is the lexicographic order. We have in  fact  $(7,0)\prec (1,3) \prec (2,1) $.
\end{itemize}
\end{exa}

\section{Formulas for Irreducible and Frobenius GNSs}

In this section we give further characterizations of symmetric and pseudo-symmetric GNSs in terms of the Frobenius element and the number of holes. Let us start by giving some notations. 

\begin{defn}
	\rm Let $S\subseteq \mathbb{N}^{d}$ be a GNS, $\textbf{h}\in \mathbb{N}^{d}$, and $\leq$ the natural partial order in $\mathbb{N}^{d}$. We define the sets:
	\begin{itemize}
		\item $\pi(\textbf{h})=\{\textbf{n}\in \mathbb{N}^{d}\mid \textbf{n}\leq \textbf{h}\}$.
		\item $LH(\textbf{h})=\{\textbf{g}\in H(S)\mid \textbf{g}\leq \textbf{h}\}$.
		\item $N(\textbf{h})=\{\textbf{n}\in \pi(\textbf{h})\mid \textbf{n}\in S\}$.
		\item $MH(S)$ the set of maximal elements in $H(S)$, with respect to $\leq$.
	\end{itemize}
	%Moreover, we denote by $h^{(i)}$ the i-th component of $\textbf{h}$, for any $i\in \{1,\ldots,d\}$.
\end{defn}

\begin{lemma} Let $\textbf{h}\in \mathbb{N}^{d}$ and let $S\subseteq \mathbb{N}^{d}$ be a GNS. Then the following hold:
	\begin{enumerate}
		\item $|\pi(\textbf{h})|=(h^{(1)}+1)(h^{(2)}+1)\cdots (h^{(d)}+1)$.
		\item $|N(\textbf{h})|+ |LH(\textbf{h})|=(h^{(1)}+1)(h^{(2)}+1)\cdots (h^{(d)}+1)$.
	\end{enumerate}
	where $|A|$ denotes the cardinality of the set $A$. \label{card} \end{lemma}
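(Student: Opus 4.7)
The plan is to prove both statements by direct counting, with part (2) reduced to part (1) via a disjoint union decomposition.

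For part (1), I would observe that $\pi(\textbf{h})$ is precisely the Cartesian product $\{0,1,\ldots,h^{(1)}\}\times \cdots \times \{0,1,\ldots,h^{(d)}\}$, since the condition $\textbf{n}\leq \textbf{h}$ unpacks to the independent coordinatewise constraints $0\leq n^{(i)}\leq h^{(i)}$ for $i=1,\ldots,d$. Each factor has $h^{(i)}+1$ elements, so the total cardinality is the product $(h^{(1)}+1)(h^{(2)}+1)\cdots (h^{(d)}+1)$ by the multiplication principle.

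For part (2), the idea is that $\pi(\textbf{h})$ partitions into its intersection with $S$ and its intersection with $H(S)=\mathbb{N}^{d}\setminus S$. The first of these two pieces is by definition $N(\textbf{h})$. For the second, any $\textbf{n}\in \pi(\textbf{h})\setminus S$ is a hole of $S$ satisfying $\textbf{n}\leq \textbf{h}$, i.e., $\textbf{n}\in LH(\textbf{h})$; conversely, every $\textbf{g}\in LH(\textbf{h})$ lies in $H(S)$ and satisfies $\textbf{g}\leq \textbf{h}$, so $\textbf{g}\in \pi(\textbf{h})\setminus S$. The disjointness of $N(\textbf{h})$ and $LH(\textbf{h})$ is immediate from $N(\textbf{h})\subseteq S$ and $LH(\textbf{h})\subseteq H(S)=\mathbb{N}^{d}\setminus S$. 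Thus $|\pi(\textbf{h})|=|N(\textbf{h})|+|LH(\textbf{h})|$, and the formula follows from part (1).

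There is no substantive obstacle here; the statement is essentially a bookkeeping lemma that sets up the counting needed in the forthcoming characterizations of symmetric and pseudo-symmetric GNSs. The only thing to be slightly careful about is stating the disjoint union decomposition cleanly and pointing out that $LH(\textbf{h})\subseteq \pi(\textbf{h})$ (which is immediate from its definition, since $\textbf{g}\leq \textbf{h}$ forces $\textbf{g}\in \pi(\textbf{h})$).
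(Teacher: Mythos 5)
Your proof is correct and follows essentially the same route as the paper: the paper counts $\pi(\textbf{h})$ as the integer points of the hyper-rectangle with opposite corners $\textbf{0}$ and $\textbf{h}$ (which is exactly your Cartesian product description), and deduces part (2) from the disjoint decomposition $\pi(\textbf{h})=N(\textbf{h})\cup LH(\textbf{h})$, just as you do. Your write-up merely spells out the coordinatewise counting and the verification of disjointness a bit more explicitly.
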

\begin{proof} Observe that $\pi(\textbf{h})$ represents the set of integer points of the hyper-rectangle whose vertices are $\textbf{h}$, the origin of axes, and the points in the coordinate planes $(h^{(1)},0,\ldots,0),(0,h^{(2)},0,\ldots,0),\ldots,(0,\ldots,0,h^{(d)})$. See the figure below: the marked points are the elements of the set $\pi((5,4))$ in $\mathbb{N}^{2}$.

\begin{tikzpicture} %[scale=0.5, >=latex, font=\footnotesize]
\draw [help lines] (0,0) grid (7,7);
\draw [<->] (0,7) node [left] {$y$} -- (0,0)
-- (7,0) node [below] {$x$};
\foreach \i in {1,...,6}
\draw (\i,1mm) -- (\i,-1mm) node [below] {$\i$} 
(1mm,\i) -- (-1mm,\i) node [left] {$\i$}; 
\node [below left] at (0,0) {$O$};
\draw [mark=*] plot (0,0);
\draw [mark=*] plot (1,0);
\draw [mark=*] plot (2,0);
\draw [mark=*] plot (3,0);
\draw [mark=*] plot (4,0);
\draw [mark=*] plot (5,0);
\draw [mark=*] plot (0,1);
\draw [mark=*] plot (1,1);
\draw [mark=*] plot (2,1);
\draw [mark=*] plot (3,1);
\draw [mark=*] plot (4,1);
\draw [mark=*] plot (5,1);
\draw [mark=*] plot (0,2);
\draw [mark=*] plot (1,2);
\draw [mark=*] plot (2,2);
\draw [mark=*] plot (3,2);
\draw [mark=*] plot (4,2);
\draw [mark=*] plot (5,2);
\draw [mark=*] plot (0,3);
\draw [mark=*] plot (1,3);
\draw [mark=*] plot (2,3);
\draw [mark=*] plot (3,3);
\draw [mark=*] plot (4,3);
\draw [mark=*] plot (5,3);
\draw [mark=*] plot (0,3);
\draw [mark=*] plot (0,4);
\draw [mark=*] plot (1,4);
\draw [mark=*] plot (2,4);
\draw [mark=*] plot (3,4);
\draw [mark=*] plot (4,4);
\draw [red, mark=*] plot (5,4);
\draw [fill=red, opacity=0.2] (0,0) rectangle (5,4);
\end{tikzpicture}

\noindent So it is easy to deduce the first assertion.\\
The second follows from the remark that  $\pi(\textbf{h})=N(\textbf{h})\cup LH(\textbf{h})$ for all $\textbf{h}\in \mathbb{N}^{d}$, moreover $N(\textbf{h})$ and $LH(\textbf{h})$ are disjoint. \end{proof}

\begin{prop} Let $(S,\textbf{f})$ be a Frobenius GNS of genus $g$, in $\mathbb{N}^{d}$. Then $2g\geq (f^{(1)}+1)\cdots(f^{(d)}+1)$. \label{genus}\end{prop}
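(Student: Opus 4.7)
The plan is to reduce the inequality to a statement about the sets $N(\textbf{f})$ and $LH(\textbf{f})$ introduced just before the proposition, and then exhibit an explicit injection from $N(\textbf{f})$ into $LH(\textbf{f})$.

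First, because $(S,\textbf{f})$ is a Frobenius GNS, $\textbf{f}$ is the unique maximal element of $H(S)$ with respect to the natural partial order $\leq$. Consequently every hole satisfies $\textbf{h}\leq\textbf{f}$, so $H(S)=LH(\textbf{f})$ and therefore $|LH(\textbf{f})|=g$. Applying Lemma~\ref{card}(2) to $\textbf{h}=\textbf{f}$, we obtain
\[
|N(\textbf{f})|+g=(f^{(1)}+1)(f^{(2)}+1)\cdots(f^{(d)}+1).
\]
The desired inequality $2g\geq(f^{(1)}+1)\cdots(f^{(d)}+1)$ is therefore equivalent to showing that $g\geq|N(\textbf{f})|$, i.e.\ $|LH(\textbf{f})|\geq |N(\textbf{f})|$.

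The key step is to define the map $\varphi\colon N(\textbf{f})\to LH(\textbf{f})$ by $\varphi(\textbf{n})=\textbf{f}-\textbf{n}$. Since $\textbf{n}\leq\textbf{f}$, the difference $\textbf{f}-\textbf{n}$ lies in $\mathbb{N}^d$ and again satisfies $\textbf{f}-\textbf{n}\leq\textbf{f}$. To show $\textbf{f}-\textbf{n}\in H(S)$, suppose to the contrary that $\textbf{f}-\textbf{n}\in S$; together with $\textbf{n}\in S$ this would give $\textbf{f}=\textbf{n}+(\textbf{f}-\textbf{n})\in S$, contradicting $\textbf{f}\in H(S)$. Thus $\varphi(\textbf{n})\in LH(\textbf{f})$, and $\varphi$ is clearly injective because subtraction from a fixed vector is injective.

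This yields $|N(\textbf{f})|\leq|LH(\textbf{f})|=g$, and combining with the identity above gives
\[
(f^{(1)}+1)\cdots(f^{(d)}+1)=|N(\textbf{f})|+g\leq 2g,
\]
which is the claimed inequality. I do not expect any serious obstacle; the only subtlety is verifying that $\textbf{f}-\textbf{n}$ is genuinely a hole, which relies crucially on $\textbf{f}$ itself being a hole (the Frobenius element of $S$), and this is where the Frobenius GNS hypothesis enters.
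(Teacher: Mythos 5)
Your proof is correct and follows essentially the same route as the paper: identify $H(S)=LH(\textbf{f})$ using that $\textbf{f}$ is the unique maximal hole, invoke Lemma~\ref{card}(2), and bound $|N(\textbf{f})|\leq|LH(\textbf{f})|$ via the injection $\textbf{n}\mapsto\textbf{f}-\textbf{n}$, which is exactly the map $\Psi_{\textbf{f}}$ the paper introduces (Definition~\ref{map} and Lemma~\ref{dis}); you merely spell out the verification that the paper leaves as "one can verify."
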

\begin{proof}
	In this case one can verify that $g=|H(S)|=|LH(\textbf{f})|$, moreover $|LH(\textbf{f})|\geq |N(\textbf{f})|$. Therefore $(f^{(1)}+1)\cdots(f^{(d)}+1)=|LH(\textbf{f})|+|N(\textbf{f})|\leq 2g$.  
\end{proof}

\noindent Every numerical semigroup is a Frobenius GNS and the previous proposition provides the well known inequality $g\geq \frac{F(S)+1}{2}$, where $F(S)$ and $g$ are respectively the Frobenius number and the genus of the given numerical semigroup.

\begin{defn}\rm Let $S\subseteq \mathbb{N}^{d}$ be a GNS and $\textbf{h}\in H(S)$. Then we define the following map:
	$$\Psi_{\textbf{h}}:N(\textbf{h})\rightarrow LH(\textbf{h}),\ \textbf{s}\longmapsto \textbf{h}-\textbf{s}$$
	It is easy to see that the map is well defined and it is injective.
	\label{map}\end{defn}

\begin{lemma} Let $S\subseteq \mathbb{N}^{d}$ be a GNS of genus $g$ and $\textbf{h}\in H(S)$. Then $|N(\textbf{h})|\leq |LH(\textbf{h})|\leq |H(S)|=g$.\label{dis} \end{lemma}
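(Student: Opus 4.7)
The statement is essentially a corollary of the setup already in place, so the plan is very short and direct. I would split it into the two inequalities and handle each separately.

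For the first inequality $|N(\textbf{h})|\leq |LH(\textbf{h})|$, I would simply invoke the map $\Psi_{\textbf{h}}$ from Definition~\ref{map}. Since that definition already records that $\Psi_{\textbf{h}}$ is well-defined and injective, comparing cardinalities of the domain and a superset of the image gives the desired inequality immediately. If I wanted to spell out why $\Psi_{\textbf{h}}$ lands in $LH(\textbf{h})$ (in case a reader wants a reminder): for $\textbf{s}\in N(\textbf{h})$, the difference $\textbf{h}-\textbf{s}$ lies in $\mathbb{N}^{d}$ since $\textbf{s}\leq \textbf{h}$, and it satisfies $\textbf{h}-\textbf{s}\leq \textbf{h}$; moreover $\textbf{h}-\textbf{s}\notin S$, because otherwise $\textbf{h}=(\textbf{h}-\textbf{s})+\textbf{s}$ would be a sum of two elements of $S$, contradicting $\textbf{h}\in H(S)$.

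For the second inequality $|LH(\textbf{h})|\leq |H(S)|=g$, nothing is required beyond the definition: $LH(\textbf{h})=\{\textbf{g}\in H(S)\mid \textbf{g}\leq \textbf{h}\}$ is by construction a subset of $H(S)$, and $|H(S)|=g$ by the definition of the genus of a GNS.

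Combining these two observations gives the chain $|N(\textbf{h})|\leq |LH(\textbf{h})|\leq g$. There is no real obstacle here; the only point worth being careful about is making sure the map $\Psi_{\textbf{h}}$ truly lands in $LH(\textbf{h})$ (not merely in $\pi(\textbf{h})$), which is exactly where the hypothesis $\textbf{h}\in H(S)$ gets used.
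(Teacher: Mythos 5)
Your proposal is correct and matches the paper's argument: the paper proves the lemma by citing the injectivity of $\Psi_{\textbf{h}}$ (with well-definedness already recorded in Definition~\ref{map}), and the second inequality is the trivial inclusion $LH(\textbf{h})\subseteq H(S)$, exactly as you say. Your extra remark verifying that $\textbf{h}-\textbf{s}\notin S$ is just the well-definedness check the paper leaves implicit.
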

\begin{proof} It follows easily since the map $\Psi_{\textbf{h}}$ is injective. \end{proof}

%\section{Formulas for Irreducible and Frobenius GNS}

Now we provide new characterizations for symmetric and pseudo-symmetric GNSs.

\begin{thm} Let $S\subseteq \mathbb{N}^{d}$ be a GNS of genus $g$. Then $S$ is symmetric if and only if there exists $\textbf{f}\in H(S)$ with $2g=(f^{(1)}+1)(f^{(2)}+1)\cdots (f^{(d)}+1)$. Furthermore $\textbf{f}$ is the Frobenius element of $S$. \label{rel}\end{thm}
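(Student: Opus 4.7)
The plan is to translate the equation $2g=(f^{(1)}+1)\cdots(f^{(d)}+1)$ into a statement about the injection $\Psi_{\textbf{f}}\colon N(\textbf{f})\to LH(\textbf{f})$ introduced in Definition~\ref{map}. Using Lemma~\ref{card} one rewrites the product as $|N(\textbf{f})|+|LH(\textbf{f})|$, while Lemma~\ref{dis} gives the chain of inequalities $|N(\textbf{f})|\le|LH(\textbf{f})|\le g$. Hence $2g=(f^{(1)}+1)\cdots(f^{(d)}+1)$ is equivalent to the double saturation $|N(\textbf{f})|=|LH(\textbf{f})|=g$. I will interpret these two saturations as (a) $\textbf{f}$ dominates every element of $H(S)$, making $\textbf{f}$ the Frobenius element of a Frobenius GNS, and (b) $\Psi_{\textbf{f}}$ is a bijection, which is exactly the statement that $\textbf{f}-\textbf{h}\in S$ for every $\textbf{h}\in H(S)$, i.e.\ condition (3) of Theorem~\ref{sym}.

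For the forward direction, I would start from Theorem~\ref{sym}(3): there exists $\textbf{f}\in H(S)$ with $\textbf{f}-\textbf{h}\in S$ for every $\textbf{h}\in H(S)$. Since $\textbf{f}-\textbf{h}\in S\subseteq\mathbb{N}^d$, each hole satisfies $\textbf{h}\le\textbf{f}$, so $H(S)=LH(\textbf{f})$ and in particular $|LH(\textbf{f})|=g$, showing that $(S,\textbf{f})$ is a Frobenius GNS with Frobenius element $\textbf{f}$. For any $\textbf{h}\in LH(\textbf{f})$ the element $\textbf{s}:=\textbf{f}-\textbf{h}$ belongs to $N(\textbf{f})$ and satisfies $\Psi_{\textbf{f}}(\textbf{s})=\textbf{h}$, so $\Psi_{\textbf{f}}$ is surjective and therefore bijective, giving $|N(\textbf{f})|=g$. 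Lemma~\ref{card} then yields $2g=(f^{(1)}+1)\cdots(f^{(d)}+1)$.

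For the converse, I would invoke Lemma~\ref{card} to rewrite the assumed equality as $|N(\textbf{f})|+|LH(\textbf{f})|=2g$. Combined with $|N(\textbf{f})|\le|LH(\textbf{f})|\le g$ from Lemma~\ref{dis} this forces equalities throughout. From $|LH(\textbf{f})|=g$ one deduces $LH(\textbf{f})=H(S)$, so every hole is $\le\textbf{f}$; thus $\textbf{f}$ is the unique maximal element of $H(S)$ with respect to the natural partial order, and $(S,\textbf{f})$ is a Frobenius GNS. The injectivity of $\Psi_{\textbf{f}}$ together with $|N(\textbf{f})|=|LH(\textbf{f})|$ makes $\Psi_{\textbf{f}}$ bijective, so each $\textbf{h}\in H(S)$ has the form $\textbf{f}-\textbf{s}$ with $\textbf{s}\in N(\textbf{f})\subseteq S$, giving $\textbf{f}-\textbf{h}\in S$. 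Theorem~\ref{sym}(3) then yields that $S$ is symmetric with Frobenius element $\textbf{f}$.

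I do not anticipate a serious obstacle; the argument is essentially bookkeeping via Lemmas~\ref{card} and \ref{dis}. The one point that requires care is the observation that Theorem~\ref{sym}(3) implicitly forces $\textbf{f}$ to dominate $H(S)$ (because $\textbf{f}-\textbf{h}\in\mathbb{N}^d$), which is what guarantees in both directions that the $\textbf{f}$ appearing in the counting condition genuinely coincides with the Frobenius element and that $LH(\textbf{f})$ coincides with $H(S)$.
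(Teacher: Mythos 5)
Your proposal is correct and follows essentially the same route as the paper: both directions are handled by translating the product into $|N(\textbf{f})|+|LH(\textbf{f})|$ via Lemma~\ref{card}, squeezing with Lemma~\ref{dis}, and exploiting the bijectivity of $\Psi_{\textbf{f}}$ together with Theorem~\ref{sym}(3). The only (immaterial) difference is that in the forward direction you deduce $LH(\textbf{f})=H(S)$ directly from $\textbf{f}-\textbf{h}\in\mathbb{N}^{d}$, whereas the paper cites Proposition~\ref{frirr}.
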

\begin{proof}
	($\Rightarrow$) We suppose $S$ is symmetric. Then $EH(S)=PF(S)=\{\textbf{f}\}$, so $LH(\textbf{f})=H(S)$ by Proposition~\ref{frirr}. Let us prove that the map $\Psi_{\textbf{f}}$ is bijective, for this it suffices to prove that is surjective. If $\textbf{h}\in LH(\textbf{f})$, since $S$ is symmetric, then $\textbf{s}=\textbf{f}-\textbf{h}\in S$, therefore $\Psi_{\textbf{f}}(\textbf{s})=\textbf{h}$ so the map is surjective. It follows that $|N(\textbf{f})|=|LH(\textbf{f})|=g$ and $2g=|N(\textbf{f})|+|LH(\textbf{f})|=(f^{(1)}+1)(f^{(2)}+1)\cdots (f^{(d)}+1)$ (Proposition~\ref{card}).\\
	($\Leftarrow$) Let $\textbf{f}\in H(S)$ be such that $2g=(f^{(1)}+1)(f^{(2)}+1)\cdots (f^{(d)}+1)$. From Lemma~\ref{dis} and Proposition~\ref{card} it follows that $2g=|N(\textbf{f})|+|LH(\textbf{f})|\leq 2|LH(\textbf{f})|\leq 2g$. So $|LH(\textbf{f})|=g$ and also $|N(\textbf{f})|=g$, hence the map $\Psi_{\textbf{f}}$ is bijective. Now we prove that for every $\textbf{h}\in H(S)$ we have $\textbf{f}-\textbf{h}\in S$. Since $|LH(\textbf{f})|=g$ and $\Psi_{\textbf{f}}$ is surjective, then $LH(\textbf{f})=H(S)$ and if $\textbf{h}\in H(S)$ there exists $\textbf{s}\in S$ such that $\Psi_{\textbf{f}}(\textbf{s})=\textbf{f}-\textbf{s}=\textbf{h}$, in other words $\textbf{f}-\textbf{h}=\textbf{s}\in S$. From Theorem~\ref{sym} it follows that $S$ is symmetric, in particular $\textbf{f}\in MH(S)$, so it is the Frobenius element.
\end{proof}

\begin{thm} Let $S\subseteq \mathbb{N}^{d}$ be a GNS of genus $g$. Then $S$ is pseudo-symmetric if and only if there exists $\textbf{f}\in H(S)$ with $2g-1=(f^{(1)}+1)(f^{(2)}+1)\cdots (f^{(d)}+1)$. Furthermore $\textbf{f}$ is the Frobenius element of $S$ \label{p-rel}\end{thm}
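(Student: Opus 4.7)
The plan is to mirror the proof of Theorem~\ref{rel} (the symmetric case), with one crucial modification: in the pseudo-symmetric setting the map $\Psi_{\textbf{f}}$ of Definition~\ref{map} will no longer be a bijection, but will miss exactly one element of $LH(\textbf{f})$, namely $\textbf{f}/2$. This single discrepancy is precisely what turns the equality $2g=\prod (f^{(i)}+1)$ into $2g-1=\prod (f^{(i)}+1)$.

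For the forward implication, assume $S$ is pseudo-symmetric. By the Corollary after Proposition~\ref{frirr}, $(S,\textbf{f})$ is a Frobenius GNS, so $\textbf{f}$ is the unique maximum of $H(S)$ with respect to $\leq$, which gives $LH(\textbf{f})=H(S)$ and hence $|LH(\textbf{f})|=g$. By Theorem~\ref{p-sym}, for every $\textbf{h}\in H(S)$ with $\textbf{h}\neq \textbf{f}/2$ we have $\textbf{f}-\textbf{h}\in S$; equivalently, every $\textbf{h}\in LH(\textbf{f})\setminus\{\textbf{f}/2\}$ lies in the image of $\Psi_{\textbf{f}}$. Since $\textbf{f}-\textbf{f}/2=\textbf{f}/2\notin S$, the element $\textbf{f}/2$ is the unique element of $LH(\textbf{f})$ omitted by $\Psi_{\textbf{f}}$. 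Combining injectivity (Definition~\ref{map}) with this single omission yields $|N(\textbf{f})|=g-1$, and then Lemma~\ref{card}(2) gives
\[
(f^{(1)}+1)\cdots(f^{(d)}+1)=|N(\textbf{f})|+|LH(\textbf{f})|=(g-1)+g=2g-1.
\]

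For the converse, suppose $\textbf{f}\in H(S)$ satisfies $|N(\textbf{f})|+|LH(\textbf{f})|=2g-1$. Since $|N(\textbf{f})|\leq |LH(\textbf{f})|\leq g$ by Lemma~\ref{dis}, the only way to split the odd number $2g-1$ is $|LH(\textbf{f})|=g$ and $|N(\textbf{f})|=g-1$. From $|LH(\textbf{f})|=g=|H(S)|$ we get $LH(\textbf{f})=H(S)$, so $\textbf{f}$ is the maximum of $H(S)$ under $\leq$ (in particular, the Frobenius element by Proposition~\ref{maxfr}). Since $\Psi_{\textbf{f}}$ is injective with domain of size $g-1$ and codomain $LH(\textbf{f})$ of size $g$, there is a unique element $\textbf{h}_{0}\in H(S)$ not in its image, i.e.\ with $\textbf{f}-\textbf{h}_{0}\notin S$.

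The main (and only non-routine) step is to identify this $\textbf{h}_{0}$ as $\textbf{f}/2$. Set $\textbf{h}_{0}'=\textbf{f}-\textbf{h}_{0}\in H(S)$. If $\textbf{h}_{0}'\neq \textbf{h}_{0}$, then by uniqueness of $\textbf{h}_{0}$ we would have $\textbf{f}-\textbf{h}_{0}'\in S$; but $\textbf{f}-\textbf{h}_{0}'=\textbf{h}_{0}\in H(S)$, a contradiction. Hence $\textbf{h}_{0}=\textbf{h}_{0}'$, i.e.\ $2\textbf{h}_{0}=\textbf{f}$, which forces every component of $\textbf{f}$ to be even and shows $\textbf{h}_{0}=\textbf{f}/2$. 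For every other $\textbf{h}\in H(S)$ we have $\textbf{f}-\textbf{h}\in S$, so by Theorem~\ref{p-sym} $S$ is pseudo-symmetric, with $\textbf{f}$ as its Frobenius element.
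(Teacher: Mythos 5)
Your proof is correct and follows essentially the same counting argument as the paper: the injective map $\Psi_{\textbf{f}}$ together with Lemma~\ref{card} and Lemma~\ref{dis}, forcing $|LH(\textbf{f})|=g$, $|N(\textbf{f})|=g-1$, and concluding via Theorem~\ref{p-sym}. The only (cosmetic) difference is in the converse: the paper reads off the evenness of the components of $\textbf{f}$ from the oddness of $2g-1$ and then notes that $\Psi_{\textbf{f}}$ induces a bijection onto $LH(\textbf{f})\setminus\{\frac{\textbf{f}}{2}\}$, whereas you identify the unique element missed by $\Psi_{\textbf{f}}$ as $\frac{\textbf{f}}{2}$ by a uniqueness/self-duality argument and obtain the evenness as a consequence; both steps are equally valid.
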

\begin{proof}
	($\Rightarrow$) We suppose $S$ is pseudo-symmetric, so $PF(S)=\{\textbf{f},\frac{f}{2}\}$, $EH(S)=\{\textbf{f}\}$ and $LH(\textbf{f})=H(S)$. Moreover, for all $\textbf{h}\in H(S)$ with $\textbf{h}\neq\frac{\textbf{f}}{2}$ we have $\textbf{f}-\textbf{h}\in S$, so arguing as in the proof of Theorem~\ref{rel} we can prove that $|N(\textbf{f})|=|LH(\textbf{f})\setminus \{\frac{\textbf{f}}{2}\}|=g-1$. It follows that $(f^{(1)}+1)(f^{(2)}+1)\cdots (f^{(d)}+1)=|N(\textbf{f})|+|LH(\textbf{f})|=g+g-1=2g-1$.\\
	($\Leftarrow$) Let $\textbf{f}\in H(S)$ be such that $2g-1=(f^{(1)}+1)(f^{(2)}+1)\cdots (f^{(d)}+1)$, in particular every component of $\textbf{f}$ is an even number and $2g-1=|N(\textbf{f})|+|LH(\textbf{f})|\leq 2|LH(\textbf{f})|\leq 2g$. Therefore $|LH(\textbf{f})|=g$ (it is impossible that $2g-1=2|LH(\textbf{f})|$) and, as a consequence, $|N(\textbf{f})|=g-1$. Furthermore $\frac{\textbf{f}}{2}\in H(S)$ because $\textbf{f}\in H(S)$, so the map $\overline{\Psi}_{\textbf{f}}:N(\textbf{f})\rightarrow LH(\textbf{f})\setminus \{\frac{\textbf{f}}{2}\}$, induced by $\Psi_{\textbf{f}}$, is bijective. This implies that : for all $\textbf{h}\in LH(\textbf{f})\setminus \{\frac{\textbf{f}}{2}\}$, in other words $\textbf{h}\in H(S)$ and $\textbf{h}\neq \frac{\textbf{f}}{2}$, there exists $\textbf{s}\in S$ such that $\textbf{f}-\textbf{s}=\textbf{h}$, that is $\textbf{f}-\textbf{h}\in S$. Hence $S$ is pseudo-symmetric by Theorem~\ref{p-sym}, in particular $\textbf{f}\in MH(S)$ and it is the Frobenius element.
\end{proof}

\begin{exa}\rm Let $S=\mathbb{N}^{2}\setminus \{(0,1),(1,1),(2,1),(3,1),(4,1),(5,1),(6,1)\}$. $S$ is a GNS of genus $g=7$ and for $\textbf{f}=(6,1)\in H(S)$ the equality $2g=(6+1)(1+1)$ holds, so $S$ is symmetric. Indeed, we have $PF(S)=\{(6,1)\}$.\\
	Let $S'=\mathbb{N}^{2}\setminus \{(1,0),(2,0),(3,0),(4,0),(5,0),(6,0),(12,0)\}$. $S'$ is a GNS of genus $g=7$ and for the element $(12,0)$, it holds $2g-1=(12+1)(0+1)$, so $S'$ is pseudo-symmetric. Indeed, we have  $PF(S')=\{(12,0),(6,0)\}$.\end{exa}

\begin{rmk}\rm If $d=1$ then Theorem~\ref{rel} becomes: $S$ is symmetric if and only if $2g=F(S)+1$, where $F(S)$ is the Frobenius number of $S$, that is a well known result for numerical semigroups. From Theorem~\ref{p-rel} the corresponding result about pseudo-symmetric numerical semigroups follows.  \end{rmk}

%(In this part you can put the general formula for Frobenius GNS)

\begin{exa}\rm
	Let $S=\mathbb{N}^{3}\setminus \{(1,0,0),(1,0,1),(2,0,0),(2,0,1)\}$. $S$ is a Frobenius GNS with Frobenius element $\textbf{f}=(2,0,1)$. Indeed $2g=8>(2+1)(0+1)(1+1)=6$.\\
	Let $S=\mathbb{N}^{3}\setminus \{(1,0,0),(1,1,0),(3,0,0),(3,1,0)\}$. In this case the Frobenius element is $\textbf{f}=(3,1,0)$ and $2g=8=(3+1)(1+1)(0+1)$, in particular $S$ is symmetric.
\end{exa}

Some open questions:
\begin{itemize} 
\item Are there other properties, as for numerical semigroups, satisfied by symmetric and pseudo-symmetric GNSs? 
\item Which are the  Frobenius allowable elements in a GNS? Are they all the maximal elements in $H(S)$ with respect to the natural partial order in $\mathbb{N}^{d}$ or only some of them?
\item Apart from numerical semigroups and irreducible GNSs, is it possible to classify other classes of Frobenius GNSs in which there is only one Frobenius element with respect to any relaxed monomial order?
\end{itemize}

\end{document}